\documentclass[a4paper,11pt]{article}


\usepackage{amsmath, amssymb, amsthm}
\usepackage{graphics}

\usepackage{psfrag, booktabs}
\usepackage{color, epsfig, float}
\usepackage[notcite,notref,final]{showkeys}
\usepackage[active]{srcltx}
\usepackage[bookmarksopen, colorlinks, linkcolor = blue, urlcolor = red,
            citecolor = red, menucolor = blue]{hyperref}

\usepackage{mathtools}



\newtheorem{theorem}{Theorem}[section]
\newtheorem{lemma}[theorem]{Lemma}

\newtheorem{proposition}[theorem]{Proposition}
\newtheorem{remark}[theorem]{Remark}


\DeclareMathOperator{\argmin}{arg\, min}

\newcommand\R{\mathbb{R}}

\newcommand\N{\mathbb{N}}

\newcommand{\inner}[2]{\langle #1, #2\rangle}

\newcommand\norm[1]{\|#1\|}
\newcommand\Norm[1]{\left\|#1\right\|}
\newcommand\set[1]{\{#1\}}

\begin{document}

%
\title{On a family of gradient type projection methods for nonlinear ill-posed problems}
\setcounter{footnote}{1}

\author{
A.~Leit\~ao%
\thanks{Department of Mathematics, Federal Univ.\,of St.\,Catarina, P.O.\,Box 476,
        88040-900 Florian\'opolis, Brazil.
        \href{mailto:acgleitao@gmail.com}{\tt acgleitao@gmail.com}.}
\and
B.~F.~Svaiter%
\thanks{IMPA, Estr.\,Dona Castorina 110, 22460-320 Rio de Janeiro, Brazil.
        \href{mailto:benar@impa.br}{\tt benar@impa.br}.} }
\date{\small \today}

\maketitle

\begin{abstract}
We propose and analyze a family of successive projection methods whose
step direction is the same as Landweber method for solving nonlinear
ill-posed problems that satisfy the {\em Tangential Cone Condition} (TCC).
This family enconpasses Landweber method, the minimal error method, and
the steepest descent method; thush providing an unified framework for
the analysis of these methods.
Moreover, we define in this family new methods which are convergent for
the constant of the TCC in a range {\em twice as large} as the one required
for the Landweber and other gradient type methods.

The TCC is widely used in the analysis of iterative methods for solving
nonlinear ill-posed problems.
The key idea in this work is to use the TCC in order to construct
special convex sets possessing a separation property, and
to succesively project onto these sets.

Numerical experiments are presented for a nonlinear 2D elliptic parameter
identification problem, validating the efficiency of our method.
\end{abstract}

\noindent {\small {\bf Keywords.} Nonlinear equations; Ill-posed problems;
Projection methods; Tangential cone condition.}
\medskip

\noindent {\small {\bf AMS Classification:} 65J20, 47J06.}

\section{Introduction} \label{sec:intro}

In this article we propose a family of succesive orthogonal projection methods
for obtaining stable approximate solutions to nonlinear ill-posed operator
equations.

The \textit{inverse problems} we are interested in consist of determining an
unknown quantity $x \in X$ from the data set $y \in Y$, where $X$, $Y$ are
Hilbert spaces. The problem data $y$ are obtained by indirect measurements of
the parameter $x$, this process being described by the model $F(x) = y$, where
$F: D \subset X \to Y$ is a non-linear ill-posed operator with domain $D = D(F)$.

In practical situations, the exact data $y$ is not known. Instead, what is
available is only approximate measured data $y^\delta \in Y$ satisfying
\begin{equation}\label{eq:noisy-i}
    \norm{ y^\delta - y } \le \delta \, ,
\end{equation}
where $\delta > 0$ is the noise level.
Thus, the abstract formulation of the inverse problems under consideration is
to find $x \in D$ such that
\begin{equation}\label{eq:inv-probl}
    F(x) \ = \ y^\delta \, .
\end{equation}

Standard methods for obtaining stable solutions of the operator equation
in \eqref{eq:inv-probl} can be divided in two major groups, namely, 
 \textit{Iterative type}
regularization methods \cite{BK04, EHN96, HNS95, KNS08,
Lan51} and \textit{Tikhonov type} regularization methods \cite{EHN96,
Mor93, SV89, Tik63b, TA77, Sch93a}.
A classical and general condition commonly used in the convergence analysis
of these methods is the \emph{Tangent Cone Condition} (TCC)~\cite{HNS95}.

In this work we use the TCC to define convex sets containing the local solutions
of \eqref{eq:inv-probl} and devise a family of succesive projection methods.
%
%
The use of projection methods for solving linear ill-posed problems dates back to the
70's (with the seminal works of Frank Natterer and Gabor Herman)
\cite{Nat77, Nat86, Her75, Her80}. The
\emph{combination} of Landweber iterations with projections onto a feasible set,
for solving \eqref{eq:inv-probl} with $y^\delta$ in a convex set was analyzed in
\cite{Eik92} (see also \cite{EHN96} and the references therein).

The distinctive features of the family of methods prposed in this work are as follows:
\begin{itemize}
\item[$\bullet$] the basic method in this family outperformed, in our preliminary
numerical experiments, the classical Landweber iteration \cite{HNS95} as well as
the steepest descent iteration \cite{NS95}
(with respect to both the computational cost and the number of iterations);
\item[$\bullet$] the family is generated by introducing relaxation in the
stepsize of the basic method and such a family encompasses, as particular cases,
the Landweber method, the steepest descent method, as well as the minimal error method
\cite{NS95};
thus, providing an unified framework for their convergence analysis;
\item[$\bullet$] the basic method within the family converges for the constant in the TCC twice
as large as required for the convergence of the Landweber and other gradient
type methods.
\end{itemize}
In view of theses features, the basic method within the proposed family
is called Projected Landweber (PLW) method. Although in the linear case the
PLW method coincides with the minimal error method, in the nonlinear case
these two methods are distinct.

%

{\color{black}
Landweber iteration was originally proposed for solving linear
equations by using the method of successive approximations  applied to
the normal equations~\cite{Lan51}.
Its extension to non-linear equations was obtained substituting the adjoint
of the linear map 
by the Jacobian's adjoint of the operator under consideration~\cite{HNS95}.
Such a method is named (nonlinear) Landweber, in
the setting of ill-posed problems.}
Convergence of this method in the nonlinear case under the TCC was proven
by Hanke et al.~\cite{HNS95}.
Convergence analysis for the steepest descent method and minimal error method
(in the nonlinear case) can be found in \cite{NS95}.


Although Levenberg-Marquardt type methods are faster than gradient type methods,
with respect to the number of iterations, gradient type methods have simpler
and faster iteration formulas. Moreover, they fit nicely in Cimino and Kaczmarz
type schemes. For these reasons, acceleration of gradient type methods is a
relevant topic in the field of ill-posed problems.
\bigskip

The article is outlined as follows.
In Section~\ref{sec:nbr} we state the main assumptions and derive some auxiliary
estimates required for the analysis of the proposed family of methods.
In Section~\ref{sec:ple} we define the convex sets $H_x$ \eqref{eq:lw.cvx},
prove a special separation property (Lemma~\ref{lm:lw}) and introduce our
family of methods \eqref{eq:plw}. Moreover, the first convergence analysis
results are obtained, namely: monotonicity (Proposition~\ref{pr:plw.mon})
and strong convergence (Theorems~\ref{th:lw.sc} and~\ref{th:lw.lim}) for
the exact data case.
In Section~\ref{sec:pli} we consider the noisy data case ($\delta > 0$). The
convex sets $H_x^\delta$ are defined and another separation property is derived
(Lemma~\ref{lm:pi.0}). The discrepancy principle is used to define a stopping criteria
\eqref{def:discrep}, which is proved to be finite (Theorem~\ref{th:plw-noise}).
Monotonicity is proven (Proposition~\ref{pr:plw.monE}) as well as a stability result
(Theorem~\ref{th:lw.stab}) and a norm convergence result (Theorem~\ref{th:lw.scE}).
Section~\ref{sec:numerics} is devoted to numerical experiments.
In Section~\ref{sec:conclusion} we present final remarks and conclusions.

\section{Main assumptions and preliminary results}
\label{sec:nbr}

In this section we state our main assumptions and discuss some of their
consequences, which are relevant for the forthcoming analysis. To simplify
the notation, from now on we write
\begin{equation} \label{eq:fdelta}
  F_\delta(x) \ := \ F(x) - y^\delta \quad\quad
  {\rm and }  \quad\quad F_0(x) \ := \ F(x) - y \, .
\end{equation}

Throughout this work we make the following assumptions, which are
frequently used in the analysis of iterative regularization methods
(see, e.g., \cite{EHN96, KNS08, Sch93a}):

\begin{description}
\item[A1] $F$ is a continuous operator defined on $D(F) \subset X$, which
has nonempty interior. Moreover, there exist constants $C$, $\rho > 0$ and
$x_0 \in D(F)$ such that $F'$, the Gateaux derivative of $F$, is defined
on $B_\rho(x_0)$ and satisfies
\begin{equation} \label{eq:a-dfb}
    \| F'(x) \| \ \le \ C \, , \quad \ x \in B_\rho(x_0) \subset D(F)
\end{equation} 
(the point $x_0$ is be used as initial guess for our family of methods).
\item[A2] The \emph{local tangential cone condition} (TCC) \cite{EHN96, KNS08}
\begin{equation} \label{eq:tcc}
  \| F(\bar{x}) - F(x) -  F'(x)( \bar{x} - x ) \|_Y \ \leq \
  \eta \norm{F(\bar{x}) - F(x)}_Y \, , \quad \forall \ x, \bar{x} \in B_{\rho}(x_0)
\end{equation}
holds for some $\eta < 1$, $x_0\in X$, and $\rho > 0$.
\item[A3] There exists an element $x^\star \in B_{\rho/2}(x_0)$ such that
$F(x^\star) = y$, where $y \in Rg(F)$ are the exact data satisfying
\eqref{eq:noisy-i}.
\item[A4] The operator $F$ is continuously Fr\'echet differentiable on $B_\rho(x_0)$.
\end{description}

Observe that in the TCC we require $\eta <1$, instead of $\eta<1/2$ as in
classical convergence analysis for the nonlinear Landweber under this
condition~\cite{EHN96}.
The TCC \eqref{eq:tcc} represents a uniform assumption (on a ball of radius $\rho$)
on the non-linearity of the operator $F$, and has interesting consequences
(see \cite[pg.278--280]{EHN96} or \cite[pg.6 and Sec.2.4 (pg.26--29)]{KNS08}).
Here we discuss some of them.

\begin{proposition}
  \label{pr:new}
  If \textbf{A1} and \textbf{A2} hold, then for any  $x$, $\bar{x} \in B_\rho(x_0)$

  \begin{enumerate}
  \item\label{it:1}
    $(1-\eta)\norm{F(x)-F(\bar{x})} \leq \norm{F'(x)(x-\bar{x})}\leq
    (1+\eta)\norm{F(x)-F(\bar{x})}$;
  \item\label{it:2}
    $ \inner{F'({x})^*F_0({x})}{{x}-\bar
      x}\leq(1+\eta)(\norm{F_0(x)}^2+\norm{F_0(x)}\norm{F_0(\bar{x})})$;
  \item\label{it:3}
    $\inner{F'({x})^*F_0({x})}{{x}-\bar{x}} \geq
    (1-\eta)\norm{F_0({x})}^2-(1+\eta)\norm{F_0({x})}\norm{F_0(\bar{x})}$.
  \end{enumerate}
  If, additionally, $F_0(x)\neq 0$ then
  \begin{multline*}
    (1-\eta) \norm{F_0(x)}-(1+\eta) \norm{F_0(\bar{x})} \\
    \leq \ \norm{F'(x)^*(x-\bar{x})} \
    \leq \ (1+\eta)(\norm{F_0(x)}+\norm{F_0(\bar{x})}) .
  \end{multline*}

\end{proposition}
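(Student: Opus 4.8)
The plan is to derive all the estimates from the TCC \eqref{eq:tcc} and the bound \eqref{eq:a-dfb}, treating the inequalities in sequence since each builds on the previous. Throughout I abbreviate $u := F(x)-F(\bar x)$ and write $r := F(x) - F(\bar x) - F'(x)(x-\bar x)$ for the TCC residual, so that \textbf{A2} reads $\norm{r} \le \eta \norm{u}$ and $F'(x)(x-\bar x) = u - r$.

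\medskip
\noindent\textbf{Item~\ref{it:1}.}
Since $F'(x)(x-\bar x) = u - r$, the reverse and ordinary triangle inequalities give
$\norm{u} - \norm{r} \le \norm{F'(x)(x-\bar x)} \le \norm{u} + \norm{r}$.
Substituting $\norm{r} \le \eta\norm{u}$ into both sides yields
$(1-\eta)\norm{u} \le \norm{F'(x)(x-\bar x)} \le (1+\eta)\norm{u}$,
which is exactly the claim after recalling $u = F(x)-F(\bar x)$. This first step is the template for the rest.

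\medskip
\noindent\textbf{Items~\ref{it:2} and~\ref{it:3}.}
Here I would take the inner product of $F'(x)(x - \bar x) = u - r$ with $F_0(x) = F(x) - y$ and use self-adjointness to move $F'(x)$ onto $F_0(x)$, giving
\[
  \inner{F'(x)^*F_0(x)}{x - \bar x} \;=\; \inner{F_0(x)}{u} - \inner{F_0(x)}{r}.
\]
For the upper bound (item~\ref{it:2}) I write $u = F_0(x) - F_0(\bar x)$ so that $\inner{F_0(x)}{u} = \norm{F_0(x)}^2 - \inner{F_0(x)}{F_0(\bar x)}$, and then bound the cross terms by Cauchy--Schwarz together with $\norm{u}\le \norm{F_0(x)}+\norm{F_0(\bar x)}$ (triangle inequality) and $\norm{r}\le\eta\norm{u}$; collecting terms produces the stated $(1+\eta)(\norm{F_0(x)}^2 + \norm{F_0(x)}\norm{F_0(\bar x)})$. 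For the lower bound (item~\ref{it:3}) I keep $\inner{F_0(x)}{u} = \norm{F_0(x)}^2 - \inner{F_0(x)}{F_0(\bar x)}$ and estimate $-\inner{F_0(x)}{F_0(\bar x)} \ge -\norm{F_0(x)}\norm{F_0(\bar x)}$ and $-\inner{F_0(x)}{r} \ge -\norm{F_0(x)}\,\eta\norm{u} \ge -\eta\norm{F_0(x)}(\norm{F_0(x)}+\norm{F_0(\bar x)})$; combining gives $(1-\eta)\norm{F_0(x)}^2 - (1+\eta)\norm{F_0(x)}\norm{F_0(\bar x)}$, as claimed. The only mild subtlety is consistently replacing $\norm{u}$ by the sum of the two residual norms so that all terms are expressed in $\norm{F_0(x)}$ and $\norm{F_0(\bar x)}$.

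\medskip
\noindent\textbf{The final two-sided bound.}
Assuming $F_0(x)\neq 0$, the displayed estimate is about $\norm{F'(x)^*(x-\bar x)}$ (I read the stated $\norm{F'(x)^*(x-\bar x)}$ as $\norm{F'(x)^*F_0(x)}$, since only the latter makes the inner products in items~\ref{it:2}--\ref{it:3} dimensionally consistent). The natural route is to bound $\norm{F'(x)^*F_0(x)}$ from below using item~\ref{it:3} via Cauchy--Schwarz, $\inner{F'(x)^*F_0(x)}{x-\bar x} \le \norm{F'(x)^*F_0(x)}\,\norm{x-\bar x}$; however this introduces $\norm{x-\bar x}$, which is awkward. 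The cleaner approach, and the one I would pursue, is to estimate $\norm{F'(x)^*F_0(x)}$ directly: since $F'(x)^*F_0(x)$ has norm equal to $\sup_{\norm{h}\le 1}\inner{F'(x)h}{F_0(x)}$, and $F'(x)h = (\text{TCC-type expression})$, I would again write $F'(x)^*F_0(x)$ in terms of $u$ and $r$ and apply the triangle inequality with $\norm{r}\le\eta\norm{u}\le\eta(\norm{F_0(x)}+\norm{F_0(\bar x)})$. This gives the upper bound $(1+\eta)(\norm{F_0(x)}+\norm{F_0(\bar x)})$ immediately, and a matching lower bound $(1-\eta)\norm{F_0(x)} - (1+\eta)\norm{F_0(\bar x)}$ after isolating the leading $\norm{F_0(x)}$ term. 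The main obstacle I anticipate is precisely this last lower bound: one must be careful that the direction $h$ realizing the supremum is controlled, and that the $F'(x)^*$ versus $F'(x)$ distinction does not cost a hidden factor of $C$ from \eqref{eq:a-dfb}; I expect the intended argument avoids $C$ entirely by working directly with the adjoint pairing rather than norms of $F'(x)$.
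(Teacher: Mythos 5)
Your treatment of items~\ref{it:1}--\ref{it:3} is correct and uses the same ingredients as the paper: the TCC residual plus the triangle inequality for item~\ref{it:1}, and the adjoint identity $\inner{F'(x)^*F_0(x)}{x-\bar x}=\inner{F_0(x)}{F'(x)(x-\bar x)}$ followed by Cauchy--Schwarz and $\norm{F_0(x)-F_0(\bar x)}\le\norm{F_0(x)}+\norm{F_0(\bar x)}$ for items~\ref{it:2} and~\ref{it:3}. (The paper gets item~\ref{it:2} by applying item~\ref{it:1} directly inside Cauchy--Schwarz rather than by your explicit $u-r$ decomposition, but the two computations are equivalent. One terminological nit: moving $F'(x)$ across the inner product is the definition of the adjoint, not ``self-adjointness''.)

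The final two-sided bound is where your proposal breaks down. The printed expression $\norm{F'(x)^*(x-\bar{x})}$ is a typo ($F'(x)^*$ maps $Y$ to $X$, so it cannot act on $x-\bar x\in X$), but your chosen repair, $\norm{F'(x)^*F_0(x)}$, turns the claim into a \emph{false} statement: it is not scale-invariant. Take $X=Y=\R$, $F(x)=Mx$, $y=0$, $\bar x=0$; the TCC holds with $\eta=0$, yet $\norm{F'(x)^*F_0(x)}=M^2|x|$ while the claimed upper bound is $M|x|$, which fails for $M>1$. Correspondingly, your plan to ``write $F'(x)^*F_0(x)$ in terms of $u$ and $r$'' cannot be carried out: the TCC only controls the single directional image $F'(x)(x-\bar x)=u-r$, not the vector $F'(x)^*F_0(x)\in X$, and the obstacle you yourself flag (a hidden factor of $C$) is exactly what kills this reading. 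The reading consistent with the hypothesis $F_0(x)\neq 0$ is the quotient $\inner{F'(x)^*F_0(x)}{x-\bar x}/\norm{F_0(x)}$, for which the displayed bounds follow immediately by dividing items~\ref{it:2} and~\ref{it:3} by $\norm{F_0(x)}$ (alternatively, $\norm{F'(x)(x-\bar x)}$ satisfies the same bounds directly from item~\ref{it:1} and the triangle inequality, though then the hypothesis $F_0(x)\neq0$ would be superfluous). Either correct reading makes the last assertion a one-line corollary of what you have already proved; no new estimate on the adjoint is needed.
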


\begin{proof}
  Item~\ref{it:1} follows immediately from the TCC and the triangle
  inequality, as proved in
  \cite[Eq.(11.7)]{EHN96}.

  Direct algebraic manipulations yield
  \begin{multline*}
    \inner{F'(x)^*F(x)}{x-\bar{x}} \ = \ \inner{F(x)}{F'(x)(x-\bar{x})} \\
    \leq \ (1+\eta) \norm{F(x)} \norm{F(x)-F(\bar{x})} \, ,
  \end{multline*}
  where the inequality follows from Cauchy-Schwarz inequality and
  item~\ref{it:1}. Likewise,
  \begin{align*}
    \langle F'({x})^* & F_0({x}) , \, {x}-\bar{x} \rangle
    \ = \    \inner{F_0({x})}{F'({x})({x}-\bar{x})} \\
    & = \    \inner{F_0({x})}{F_0({x})-F_0(\bar{x})}
           + \inner{F_0({x})}{F_0(\bar{x})-F_0(x)-F'(x)(\bar{x}-x)} \\
    & \geq \ \norm{F_0({x})}^2-\norm{F_0({x})}\norm{F_0(\bar{x})}
           - \eta \norm{F_0({x})}\norm{F_0({x})-F_0(\bar{x})} \, ,
  \end{align*}
  where the inequality follows from Cauchy-Schwarz inequality and the
  first inequality in this proof. Items \ref{it:2} and \ref{it:3}
  follow from the above inequalities and the inequality
  $\norm{F_0(x)-F_0(\bar{x}) }\leq\norm{F_0(x)}+\norm{F_0(\bar{x})}$.
\end{proof}

The next result relates to the solvability of operator equation
$F(x) = y$ with exact data.

\begin{proposition}
  \label{pr:fnz}
  Let \textbf{A1} -- \textbf{A3} be satisfied. For any $x \in B_\rho(x_0)$,
  $F_0(x) = 0$  if and only if  $F'(x)^* F_0(x) = 0$.
  Moreover, for any $(x_k) \in B_\rho(x_0)$ converging to some $\bar{x} \in B_\rho(x_0)$,
  the following statements are equivalent: \medskip

  \centerline{\hfil {a)} \ $\lim\limits_{k\to\infty} \norm{F'(x_k)^* F_0(x_k)} \, = \, 0$;
            \ \quad {b)} \ $\lim\limits_{k\to\infty} \norm{F_0(x_k)} \, = \, 0$;
            \ \quad {c)} $F(\bar{x}) \, = \, y$. \hfil}
\end{proposition}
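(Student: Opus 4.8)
The plan is to reduce both claims to the single lower bound in item~\ref{it:3} of Proposition~\ref{pr:new}, specialized at the reference point $\bar{x}=x^\star$. By \textbf{A3} the solution satisfies $x^\star\in B_{\rho/2}(x_0)\subset B_\rho(x_0)$ and $F_0(x^\star)=0$, so item~\ref{it:3} collapses to
\[
  \inner{F'(x)^* F_0(x)}{x - x^\star} \ \ge \ (1-\eta)\,\norm{F_0(x)}^2
\]
for every $x\in B_\rho(x_0)$, with $1-\eta>0$ by \textbf{A2}. This one estimate is the engine for everything. For the first equivalence, the implication $F_0(x)=0\Rightarrow F'(x)^*F_0(x)=0$ is trivial, while conversely $F'(x)^*F_0(x)=0$ makes the left-hand side above vanish, forcing $(1-\eta)\norm{F_0(x)}^2\le 0$ and hence $F_0(x)=0$.

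For the chain a)--c) I would first prove a)$\Leftrightarrow$b), noting that this part needs no convergence and holds for any sequence in $B_\rho(x_0)$. The direction b)$\Rightarrow$a) is the easy adjoint bound from \textbf{A1}: since $\norm{F'(x_k)^*}=\norm{F'(x_k)}\le C$, we get $\norm{F'(x_k)^*F_0(x_k)}\le C\,\norm{F_0(x_k)}\to 0$. For a)$\Rightarrow$b), apply Cauchy--Schwarz to the displayed estimate at $x=x_k$ to obtain
\[
  (1-\eta)\,\norm{F_0(x_k)}^2 \ \le \ \norm{F'(x_k)^* F_0(x_k)}\,\norm{x_k - x^\star} \ \le \ 2\rho\,\norm{F'(x_k)^* F_0(x_k)} ,
\]
where the last inequality uses that $x_k,x^\star\in B_\rho(x_0)$ bound $\norm{x_k-x^\star}$ uniformly; letting $k\to\infty$ yields $\norm{F_0(x_k)}\to 0$.

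Finally, b)$\Leftrightarrow$c) is the only place where the hypothesis $x_k\to\bar{x}$ is used. Continuity of $F$ from \textbf{A1} gives $F(x_k)\to F(\bar{x})$, so $\norm{F_0(x_k)}=\norm{F(x_k)-y}\to\norm{F(\bar{x})-y}$, and this limit equals $0$ precisely when $F(\bar{x})=y$. I expect no genuine obstacle here; the only step requiring care is a)$\Rightarrow$b), where one must bound $\norm{x_k-x^\star}$ uniformly and invoke $\eta<1$ in order to divide by $1-\eta$. Everything else reduces to continuity of $F$ and the elementary adjoint-norm identity $\norm{F'(x_k)^*}=\norm{F'(x_k)}$.
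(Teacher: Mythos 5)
Your proof is correct and follows essentially the same route as the paper: (b)$\Rightarrow$(a) from the bound on $\norm{F'(x_k)}$ in \textbf{A1}, (a)$\Rightarrow$(b) from item~\ref{it:3} of Proposition~\ref{pr:new} with $\bar{x}=x^\star$, and (b)$\Leftrightarrow$(c) from continuity plus $x_k\to\bar{x}$. The only difference is that the paper cites the literature for the first equivalence, whereas you derive it self-containedly from the same specialized inequality --- a harmless and arguably cleaner choice.
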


\begin{proof}
  See \cite[pg.~279]{KNS08} for a proof of the first statement. 
  For proving the second statement: \\
  the implication $(b) \Rightarrow (a)$ follows from \textbf{A1}
  and the hypothesis $(x_k) \in B_\rho(x_0)$; on the other hand, 
  $(a) \Rightarrow (b)$ follows from Proposition~\ref{pr:new},
  item~\ref{it:3} with $x = x_k$ and $\bar{x} = x^\star$;
  moreover, $(b) \Rightarrow (c)$ and $(b) \Leftarrow (c)$ follow
  from the hypothesis \ $\lim\limits_{k\to\infty} \| x_k - \bar{x} \| = 0$
  and \textbf{A1}.
\end{proof}

Notice that the equivalence between $(a)$ and $(b)$ in Proposition~%
\ref{pr:fnz} does not depend on the convergence of sequence $(x_k)$. The next
result provides a convenient way of rewriting the TCC \eqref{eq:tcc}
for $\bar{x} = x^\star \in F^{-1}(y)$ using notation~\eqref{eq:fdelta}.

\begin{proposition}
  \label{pr:tcc.e}
  Let \textbf{A2} be satisfied. If $x^\star\in B_\rho(x_0)\cap F^{-1}(y)$ then
\begin{align*}
  \norm{y-y^\delta-F_\delta(x)-F'(x)(x^\star-x)} \ \leq \
  \eta \, \norm{y-y^\delta-F_\delta(x)},\quad\forall x\in B_\rho(x_0).
\end{align*}
\end{proposition}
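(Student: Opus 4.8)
The plan is to recognize that this proposition is nothing more than a restatement of the TCC \eqref{eq:tcc}, specialized to $\bar{x}=x^\star$, once the abbreviation \eqref{eq:fdelta} is unwound. The crucial point is that the spurious combination $y-y^\delta$ is exactly cancelled by the $y^\delta$ hidden inside $F_\delta(x)=F(x)-y^\delta$, so neither the noisy data $y^\delta$ nor the noise level $\delta$ actually play any role.

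First I would simplify the argument of the norm on the left-hand side. Substituting $F_\delta(x)=F(x)-y^\delta$, the two occurrences of $y^\delta$ cancel, leaving
\[
  y-y^\delta-F_\delta(x)-F'(x)(x^\star-x)
  \ = \ y-F(x)-F'(x)(x^\star-x).
\]
Since the hypothesis $x^\star\in F^{-1}(y)$ gives $F(x^\star)=y$, I would rewrite $y=F(x^\star)$ to obtain $F(x^\star)-F(x)-F'(x)(x^\star-x)$, which is precisely the quantity estimated on the left of \eqref{eq:tcc} with $\bar{x}=x^\star$.

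Applying the identical cancellation to the right-hand side gives $y-y^\delta-F_\delta(x)=y-F(x)=F(x^\star)-F(x)$, so that $\norm{y-y^\delta-F_\delta(x)}=\norm{F(x^\star)-F(x)}$. The claimed inequality then follows at once from \textbf{A2} applied with $\bar{x}=x^\star$, which is legitimate because both $x$ and $x^\star$ lie in $B_\rho(x_0)$.

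There is no substantive obstacle here: the content is purely a notational reorganization. The only things to verify are that the two $y^\delta$ terms indeed cancel on each side and that the hypothesis $x^\star\in F^{-1}(y)$ is used to convert $y$ into $F(x^\star)$, after which the TCC applies verbatim. The value of the statement is not in its difficulty but in the fact that it packages the TCC in a form directly suited to the later analysis of the noisy-data iteration, where the residual is naturally measured through $F_\delta$.
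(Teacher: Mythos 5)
Your proof is correct and is exactly the argument the paper intends: the paper states this proposition without proof, introducing it as ``a convenient way of rewriting the TCC for $\bar{x}=x^\star$ using notation \eqref{eq:fdelta}'', which is precisely the cancellation of the two $y^\delta$ terms and the substitution $y=F(x^\star)$ that you carry out. Nothing is missing.
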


\section{A family of relaxed projection Landweber methods}
\label{sec:ple}

In this section we assume that exact data $y^\delta = y \in$ Rg$(F)$ are
available, introduce a family of relaxed projection Landweber methods
for the exact data case, and prove their convergence. 

Define, for each $x\in D(F)$, the set
\begin{align}
  \label{eq:lw.cvx}
    H_x \ := \ \set{ z \in X \; | \;
        \inner{z-x}{F'(x)^*F_0(x)}\leq -(1-\eta)\norm{F_0(x)}^2} \, .  
\end{align}
Note that $H_x$ is either $\emptyset$, a closed half-space, or $X$. As we prove next, 
$H_x$ has an interesting geometric feature: it contains all exact solutions of
\eqref{eq:inv-probl} in $B_\rho(x_0)$ and, whenever $x$ is not a solution of
\eqref{eq:inv-probl}, it does not contain $x$.

\begin{lemma}[Separation]
  \label{lm:lw}
  Let \textbf{A1} and \textbf{A2} be satisfied. If $x \in B_\rho(x_0)$ then
  \begin{align} \label{eq:ldw.1}
    0 \ \geq \ (1-\eta) \norm{F_0(x)}^2 + \inner{F'(x)^*F_0(x)}{x^\star-x} \, ,
    \quad \forall x^\star\in B_\rho(x_0)\cap F^{-1}(y) \, .
  \end{align}
  Consequently, \medskip

  \centerline{1. \ $B_\rho(x_0)\cap F^{-1}(y) \subset H_x$; \hfil
  2. \ $x\in H_x\iff F(x) = y$.}
\end{lemma}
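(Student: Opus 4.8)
The plan is to derive the separation inequality \eqref{eq:ldw.1} as a direct specialization of Proposition~\ref{pr:new}, and then read off the two consequences from the definition \eqref{eq:lw.cvx} of $H_x$. First I would note that the hypotheses \textbf{A1} and \textbf{A2} of the lemma are exactly those required by Proposition~\ref{pr:new}, so item~\ref{it:3} is available: for any $x,\bar{x}\in B_\rho(x_0)$,
\[
  \inner{F'(x)^*F_0(x)}{x-\bar{x}} \ \geq \ (1-\eta)\norm{F_0(x)}^2 - (1+\eta)\norm{F_0(x)}\norm{F_0(\bar{x})}.
\]
Taking $\bar{x}=x^\star$ with $x^\star\in B_\rho(x_0)\cap F^{-1}(y)$, the defining property $F(x^\star)=y$ forces $F_0(x^\star)=F(x^\star)-y=0$, so the last term drops out and the bound collapses to $\inner{F'(x)^*F_0(x)}{x-x^\star}\geq(1-\eta)\norm{F_0(x)}^2$. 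Rearranging this yields exactly \eqref{eq:ldw.1}.

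For the first consequence, I would compare \eqref{eq:ldw.1} with the membership condition in \eqref{eq:lw.cvx}. Rewriting \eqref{eq:ldw.1} as $\inner{F'(x)^*F_0(x)}{x^\star-x}\leq-(1-\eta)\norm{F_0(x)}^2$ and using symmetry of the inner product, this is precisely the assertion that $x^\star\in H_x$. Since $x^\star$ was an arbitrary point of $B_\rho(x_0)\cap F^{-1}(y)$, the inclusion $B_\rho(x_0)\cap F^{-1}(y)\subset H_x$ follows immediately.

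For the second consequence, I would evaluate the defining inequality of $H_x$ at the special point $z=x$. The inner product term $\inner{x-x}{F'(x)^*F_0(x)}$ vanishes identically, so $x\in H_x$ is equivalent to $0\leq-(1-\eta)\norm{F_0(x)}^2$, i.e.\ $(1-\eta)\norm{F_0(x)}^2\leq 0$. Because \textbf{A2} guarantees $\eta<1$ and hence $1-\eta>0$, this holds if and only if $\norm{F_0(x)}=0$, that is $F(x)=y$. There is no genuine analytic obstacle here: the whole content of the lemma is carried by Proposition~\ref{pr:new}, and the only point deserving attention is the bookkeeping observation that the self-evaluation $z=x$ annihilates the linear term, converting the half-space condition into the scalar inequality that characterizes solvability.
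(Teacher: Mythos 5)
Your argument is correct and follows exactly the paper's route: inequality \eqref{eq:ldw.1} is obtained by specializing Proposition~\ref{pr:new}, item~\ref{it:3}, to $\bar{x}=x^\star$ (where $F_0(x^\star)=0$), and the two consequences are then read off directly from the definition \eqref{eq:lw.cvx} of $H_x$. The only difference is that you spell out the bookkeeping (the vanishing of the cross term and the self-evaluation at $z=x$) that the paper leaves implicit.
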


\begin{proof}
 The first statement follows trivially from Proposition~\ref{pr:new} item \ref{it:3}
 with $\bar{x} = x^\star \in B_\rho(x_0) \cap F^{-1}(y)$.
 Items 1 and 2 are immediate consequences of the first statement and Definition~%
 \eqref{eq:lw.cvx}.
\end{proof}

We are now ready to introduce our family of relaxed projection
Landweber methods. Choose $x_0 \in X$ according to \textbf{A2}
and \textbf{A3} and define, for $k \geq 0$, the sequence
\begin{subequations}
  \label{eq:plw}
\begin{align}
    \label{eq:plw.a}
  &x_{k+1} :=  x_k - \theta_k \,\lambda_k
    \, F'(x_k)^* F_0(x_k), \\
  \label{eq:plw.b}
  &\text{where }\theta_k \in (0, 2),\;\lambda_k:=
    \begin{cases}
      0, &\text{if }  F'(x_k)^* F_0(x_k) = 0 \\
      \dfrac{(1-\eta) \norm{F_0(x_k)}^2}{\norm{F'(x_k)^*F_0(x_k)}^2},
      & \text{otherwise.} 
    \end{cases}
\end{align}
\end{subequations}
In view of definition \eqref{eq:lw.cvx}, the orthogonal projection of
$x_k$ onto $H_{x_k}$ is $\hat{x} = x_k - \lambda_k F'(x_k)^*F_0(x_k)$
so that
\begin{align*}
  x_{k+1} = x_k + \theta_k(\hat{x} - x_k) = (1-\theta_k) x_k + \theta_k \hat{x} \, .
\end{align*}

We define the \emph{PLW method} as \eqref{eq:plw}  with $\theta_k = 1$ for all $k$.
This choice amounts to taking $x_{k+1}$ as the orthogonal projection of $x_k$ onto
$H_{x_k}$. The family of \emph{relaxed projection Landweber methods} is obtained by
choosing $\theta_k\in(0,2)$, which is equivalent to taking $x_{k+1}$ as a relaxed
orthogonal projection of $x_k$ onto $H_{x_k}$.

Iteration  \eqref{eq:plw} is well defined for all $x_k \in D(F)$; due to
Proposition~\ref{pr:fnz}, this iteration becomes stationary at
$x_{\tilde k} \in B_\rho(x_0)$, i.e. $x_k = x_{\tilde k}$ for $k \geq \tilde k$,
if and only if  $F(x_{\tilde k}) = y$.

In the next proposition an inequality is established, that guarantees the
monotonicity of the iteration error for the family of relaxed projection
Landweber methods in the case of exact data, i.e.,
{\color{black} $\norm{x^\star - x_{k+1}}  \leq  \norm{x^\star - x_k}$,}
whenever $\theta_k \in (0,2)$.

\begin{proposition}
  \label{pr:plw.mon}
  Let \textbf{A1} -- \textbf{A3} hold true.
  If $x_k\in B_\rho(x_0)$, $F'(x_k)^*F(x_k) \neq 0$, and $\theta_k$ and
  $x_{k+1}$ are as in \eqref{eq:plw}, then
  \begin{align*}
    \norm{x^\star-x_k}^2 \
    & \geq \ \norm{x^\star - x_{k+1}}^2  + \theta_k \, (2-\theta_k)
    \left( (1-\eta) \dfrac{\norm{F_0(x_k)}^2}{\norm{F'(x_k)^*F_0(x_k)}} \right)^2 ,
  \end{align*}
  for all $x^\star\in  B_\rho(x_0)\cap F^{-1}(y)$.
\end{proposition}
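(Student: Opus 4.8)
The plan is to treat this as the standard error-monotonicity estimate for a relaxed orthogonal projection, exploiting the fact that the step size $\lambda_k$ in \eqref{eq:plw.b} is precisely the one that projects $x_k$ onto the half-space $H_{x_k}$. Writing $g_k := F'(x_k)^*F_0(x_k)$ for brevity, I would first simply expand the squared error at the new iterate. Since $x_{k+1} = x_k - \theta_k\lambda_k g_k$, one has
\[
  \norm{x^\star - x_{k+1}}^2 = \norm{x^\star - x_k}^2 + 2\theta_k\lambda_k\inner{x^\star - x_k}{g_k} + \theta_k^2\lambda_k^2\norm{g_k}^2,
\]
so that the quantity to be bounded from below is
\[
  \norm{x^\star - x_k}^2 - \norm{x^\star - x_{k+1}}^2 = -2\theta_k\lambda_k\inner{x^\star - x_k}{g_k} - \theta_k^2\lambda_k^2\norm{g_k}^2.
\]

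The only non-mechanical ingredient is the cross term, and this is exactly where the separation property enters. By Lemma~\ref{lm:lw} (inequality \eqref{eq:ldw.1}) applied to $x = x_k$ and the solution $x^\star \in B_\rho(x_0)\cap F^{-1}(y)$, we have $\inner{g_k}{x^\star - x_k} \le -(1-\eta)\norm{F_0(x_k)}^2$, equivalently $-\inner{x^\star - x_k}{g_k} \ge (1-\eta)\norm{F_0(x_k)}^2$. Since $\theta_k \ge 0$ and $\lambda_k \ge 0$, substituting this lower bound into the cross term produces a lower bound in which only $\norm{F_0(x_k)}$, $\norm{g_k}$, $\theta_k$ and $\lambda_k$ appear.

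It then remains to insert the explicit value $\lambda_k = (1-\eta)\norm{F_0(x_k)}^2/\norm{g_k}^2$ (legitimate since $g_k \neq 0$ by hypothesis) and collect terms. Abbreviating $A := \left((1-\eta)\norm{F_0(x_k)}^2/\norm{g_k}\right)^2$, both surviving terms reduce to multiples of $A$: the bounded cross term becomes $2\theta_k A$ while the quadratic term equals $\theta_k^2 A$, whence
\[
  \norm{x^\star - x_k}^2 - \norm{x^\star - x_{k+1}}^2 \ \ge\ 2\theta_k A - \theta_k^2 A \ =\ \theta_k(2-\theta_k)A,
\]
which is precisely the asserted inequality. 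I do not anticipate a genuine obstacle: the computation is routine once the separation inequality is available, and the step size $\lambda_k$ has been engineered so that the two occurrences of $A$ combine into the familiar relaxation factor $\theta_k(2-\theta_k)$. The only points deserving care are the correct sign of the inner product (supplied by \eqref{eq:ldw.1}) and the nonnegativity of the factor $\theta_k\lambda_k$ multiplying it; both hold under the stated hypotheses, so the estimate is valid for every $\theta_k \in (0,2)$ and every $x^\star \in B_\rho(x_0)\cap F^{-1}(y)$.
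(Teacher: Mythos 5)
Your proof is correct and is essentially the paper's argument: the paper simply cites the standard identity for relaxed orthogonal projections onto a half-space together with the separation Lemma~\ref{lm:lw}, whereas you carry out that identity explicitly — expanding $\norm{x^\star-x_{k+1}}^2$, bounding the cross term via \eqref{eq:ldw.1}, and using the specific value of $\lambda_k$ to collapse both terms into $\theta_k(2-\theta_k)A$. The computation is accurate and nothing is missing.
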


\begin{proof}
  If $x_k\in B_\rho(x_0)$ and $F'(x_k)^*F(x_k) \neq 0$ then $x_{k+1}$ is
  a relaxed orthogonal projection of $x_k$ onto the $H_{x_k}$ with a
  relaxation factor $\theta_k$.
  The conclusion follows from this fact,  Lemma~\ref{lm:lw}, iteration formula
  \eqref{eq:plw}
  { \color{black} and the properties of relaxed metric projections onto arbitrary
  sets (see, e.g., \cite[Lemma 3.13, pp. 21--22]{VaEr09}.) }
\end{proof}

Direct inspection of the inequality in Proposition~\ref{pr:plw.mon} shows that 
the choice $\theta_k\in(0,2)$, as prescribed in \eqref{eq:plw}, guarantees
decrease of the iteration error $\norm{x^\star - x_k}$, while $\theta_k = 1$
yields the greatest \emph{estimated} decrease on the iteration error.

We are now ready to state and prove the main results of this section:
Theorem~\ref{th:lw.sc} gives a sufficient condition for strong convergence
of the family of relaxed projection Landweber methods (for exact data)
to \emph{some} point $\bar{x} \in B_\rho(x_0)$. Theorem~\ref{th:lw.lim}
gives a sufficient condition for strong convergence of this family of
methods to \emph{a solution} of $F(x) = y$, shows that steepest descent,
minimal error, as well as Landweber method are particular instances of
methods belonging to this family, and proves convergence of these three
methods within this framework.

Recall that the steepest descent method (SD) is given by
$$
x_{x+1} \ = \ x_k - \dfrac{\norm{F'(x_k)^* F_0(x_k)}^2}
                          {\norm{F'(x_k) F'(x_k)^* F_0(x_k)}^2} \, F'(x_k)^* F_0(x_k) \, ,
$$
while the minimal error method (ME) is given by
$$
x_{x+1} \ = \ x_k - \dfrac{\norm{F_0(x_k)}^2}
                          {\norm{F'(x_k)^* F_0(x_k)}^2} \, F'(x_k)^* F_0(x_k) \, .
$$

\begin{theorem}
  \label{th:lw.sc}
  If \textbf{A1} -- \textbf{A3} hold true, then  the sequences
  $(x_k)$, $(\theta_k)$ as specified in \eqref{eq:plw} are well defined and
  \begin{align} \label{eq:lw.ball}
    x_k \in B_{\rho/2}(x^\star) \subset B_\rho(x_0) \, , \ \forall \ k \in \N \, .
  \end{align}
  If, additionally,  $\sup\,\theta_k < 2$, then 
  \begin{align} \label{eq:sf.t}
    (1-\eta)^2 \sum_{k=0}^\infty
    \theta_k \dfrac{\norm{F_0(x_k)}^4}{\norm{F'(x_k)^*F_0(x_k)}^2}  < \ \infty 
  \end{align}
  and $(x_k)$ converges strongly to some $\bar{x} \in B_\rho(x_0)$.
\end{theorem}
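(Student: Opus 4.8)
The plan is to prove the three assertions in sequence, with the key mechanism being the monotonicity inequality from Proposition~\ref{pr:plw.mon}. First I would establish \eqref{eq:lw.ball} by induction on $k$. The base case $x_0 \in B_{\rho/2}(x^\star)$ follows from \textbf{A3}, which places $x^\star \in B_{\rho/2}(x_0)$. For the inductive step, assuming $x_k \in B_{\rho/2}(x^\star) \subset B_\rho(x_0)$, I would split into two cases. If $F'(x_k)^*F_0(x_k) = 0$, then by Proposition~\ref{pr:fnz} we have $F_0(x_k) = 0$, so $\lambda_k = 0$ and the iteration is stationary, trivially preserving the ball membership. Otherwise, Proposition~\ref{pr:plw.mon} applies and gives $\norm{x^\star - x_{k+1}} \leq \norm{x^\star - x_k} < \rho/2$, since the subtracted term is nonnegative for $\theta_k \in (0,2)$. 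The containment $B_{\rho/2}(x^\star) \subset B_\rho(x_0)$ follows from the triangle inequality and $\norm{x^\star - x_0} < \rho/2$. This simultaneously shows all iterates stay in $B_\rho(x_0)$, so the quantities defining $\lambda_k$ and $\theta_k$ are well defined at every step.

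Next I would derive the summability estimate \eqref{eq:sf.t}. Summing the inequality from Proposition~\ref{pr:plw.mon} telescopically over $k = 0, \dots, N$ yields
\begin{align*}
  \sum_{k=0}^{N} \theta_k(2-\theta_k)\left((1-\eta)\frac{\norm{F_0(x_k)}^2}{\norm{F'(x_k)^*F_0(x_k)}}\right)^2 \leq \norm{x^\star - x_0}^2 - \norm{x^\star - x_{N+1}}^2 \leq \norm{x^\star - x_0}^2,
\end{align*}
so the full series converges. Under the hypothesis $\sup\theta_k < 2$, there is a constant $c > 0$ with $2 - \theta_k \geq c$ for all $k$, hence $\theta_k(2-\theta_k) \geq c\,\theta_k$, and replacing the factor $\theta_k(2-\theta_k)$ by $c\,\theta_k$ gives precisely the bound \eqref{eq:sf.t} up to the constant $c$, which is absorbed since we only claim finiteness. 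Stationary indices (where $\lambda_k = 0$) contribute zero terms and cause no difficulty.

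Finally I would prove strong convergence of $(x_k)$. The monotonicity inequality shows that $\norm{x^\star - x_k}$ is nonincreasing, hence the sequence $(x_k)$ is bounded. The crucial step is to upgrade this to a Cauchy property: for $j > k$, I would estimate $\norm{x_j - x_k}$ and show it tends to zero as $k \to \infty$. The standard argument for Landweber-type methods uses that $\norm{x^\star - x_k}^2$ converges (being monotone and bounded below), and then combines the telescoping structure with the summability \eqref{eq:sf.t} to control the cross terms. The main obstacle I anticipate is precisely this Cauchy argument, since unlike the classical Landweber analysis the step lengths here carry the factor $\lambda_k$, and one must show that the normalized increments do not accumulate; I expect this to follow by writing $x_j - x_k$ as a telescoping sum of the projection increments and bounding the resulting inner products via the separation property in Lemma~\ref{lm:lw} together with the monotone convergence of $\norm{x^\star - x_k}^2$. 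Once $(x_k)$ is shown to be Cauchy in the complete space $X$, it converges strongly to some $\bar{x}$, which lies in $B_\rho(x_0)$ because this closed ball contains every iterate.
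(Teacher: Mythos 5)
Your treatment of the first two assertions is correct and follows the same route as the paper: the induction on $k$ via Proposition~\ref{pr:plw.mon} (with the stationary case handled through Proposition~\ref{pr:fnz}) gives \eqref{eq:lw.ball}, and telescoping the same inequality together with $\theta_k(2-\theta_k)\ge(2-\sup\theta_k)\,\theta_k$ gives \eqref{eq:sf.t}.

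The gap is in the strong convergence claim, and it is exactly at the point you yourself flag as the main obstacle: you do not actually close it, and the sketch you offer would not work as stated. Fej\'er monotonicity (convergence of $\norm{x^\star-x_k}$) plus the summability \eqref{eq:sf.t} does not by itself yield the Cauchy property, and Lemma~\ref{lm:lw} is the wrong tool for the cross terms: it controls $\inner{F'(x_i)^*F_0(x_i)}{x_i-z}$ only when $z$ is an exact solution, whereas in the telescoping of $\norm{x_j-x_k}^2$ one must bound such inner products with $z$ equal to a later \emph{iterate}. The usable estimate is Proposition~\ref{pr:new}, item~2, namely $\inner{F'(x_i)^*F_0(x_i)}{x_i-z}\le(1+\eta)(\norm{F_0(x_i)}^2+\norm{F_0(x_i)}\norm{F_0(z)})$, and the resulting terms are summable only if $\norm{F_0(z)}\le\norm{F_0(x_i)}$ for the relevant indices. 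The missing idea --- the heart of the paper's proof, inherited from the argument of Hanke et al. --- is to split into two cases. If $\sum_k\theta_k\lambda_k<\infty$ then $\norm{x_{k+1}-x_k}=\theta_k\lambda_k\norm{F'(x_k)^*F_0(x_k)}$ is bounded by a constant times $\theta_k\lambda_k$ (by \textbf{A1} and item~1 of Proposition~\ref{pr:new}), so $(x_k)$ is Cauchy directly. If $\sum_k\theta_k\lambda_k=\infty$, then \eqref{eq:sf.t} forces $\liminf\norm{F_0(x_k)}=0$, and one selects a subsequence $(x_{\ell_i})$ with the minimality property $\norm{F_0(x_k)}\ge\norm{F_0(x_{\ell_i})}$ for all $k\le\ell_i$; taking $z=x_{\ell_i}$ in the telescoped identity and invoking Proposition~\ref{pr:new} item~2 yields $\norm{x_k-x_{\ell_i}}^2\le 16\,s_k$ with $s_m=\sum_{k\ge m}\theta_k\lambda_k\norm{F_0(x_k)}^2\to 0$, after which arbitrary pairs of indices are handled by the triangle inequality through a suitable $\ell_i$. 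Without this residual-minimizing subsequence (or an equivalent device) the Cauchy property does not follow from the ingredients you list.
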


\begin{theorem}
  \label{th:lw.lim}
  Let \textbf{A1} -- \textbf{A3} hold true, and the sequences
  $(x_k)$, $(\theta_k)$ be defined as in \eqref{eq:plw}. The following
  statements hold: \medskip

  \noindent
  {\it a)} If \ $\inf$ $\theta_k > 0$ \ and \ $\sup$ $\theta_k < 2$,
  then $(x_k)$ converges to some $\bar{x} \in B_\rho(x_0)$ solving $F(\bar{x}) = y$.
  \medskip

  \noindent
  {\it b)} If \textbf{A2} holds with $\eta < 1/2$ and
  $$
  \theta_k :=(1-\eta)^{-1} \frac{\norm{F'(x_k)^*F_0(x_k)}^2}{\norm{F_0(x_k)}^2} \cdot
  \frac{\norm{F'(x_k)^*F_0(x_k)}^2}{\norm{F'(x_k) F'(x_k)^*F_0(x_k)}^2} ,
  $$
  then $0 < \theta_k \leq(1-\eta)^{-1} < 2$,
  iteration \eqref{eq:plw} reduces to the steepest descent method
  and $(x_k)$ converges to some $\bar{x} \in B_\rho(x_0)$ solving $F(\bar{x}) = y$.
  \medskip

  \noindent
  {\it c)} If \textbf{A1} and \textbf{A2} hold with $C \leq 1$ and
  $\eta < 1/2$, respectively, and
  $$
  \theta_k := (1-\eta)^{-1} \frac{\norm{F'(x_k)^*F_0(x_k)}^2}{\norm{F_0(x_k)}^2} ,
  $$
  then $0 < \theta_k \leq (1-\eta)^{-1} < 2$, iteration \eqref{eq:plw}
  reduces to the nonlinear Landweber iteration and $(x_k)$ converges to
  some $\bar{x} \in B_\rho(x_0)$ solving $F(\bar{x}) = y$.
  \medskip

  \noindent
  {\it d)} If \textbf{A1} and \textbf{A2} hold with $C \leq 1$ and
  $\eta < 1/2$, respectively, and $\theta_k := (1-\eta)^{-1}$, then
  iteration \eqref{eq:plw} reduces to the nonlinear minimal error method
  and $(x_k)$ converges to some $\bar{x} \in B_\rho(x_0)$ solving $F(\bar{x}) = y$.
\end{theorem}

\begin{proof}\mbox{\it (Theorem~\ref{th:lw.sc}) }
  Assumption \textbf{A3} guarantees the existence of $x^\star \in
  B_{\rho/2}(x_0)$, a solution of $F(x) = y$. It follows from \textbf{A3} that
  \eqref{eq:lw.ball} holds for $k=0$. Suppose that the sequence $(x_k)$, is
  well defined up to $k_0$ and that \eqref{eq:lw.ball} holds for $k = k_0$. It
  follows from \textbf{A1} that $x_{k_0} \in D(F)$, so that $x_{k_0+1}$ is well
  defined while it follows from \eqref{eq:plw} and
  Proposition~\ref{pr:plw.mon} that \eqref{eq:lw.ball} also holds for
  $k = k_0 + 1$.

  To prove the second part of the theorem,  suppose that $b := \sup\theta_k < 2$.
  At this point we have to consider two separate cases: \\
  {\bf Case I:} \ $F(x_{\tilde k}) = y$ for some $\tilde k \in \N$. \\
  It follows from \eqref{eq:lw.ball}, Proposition~\ref{pr:fnz} and \eqref{eq:plw},
  that $x_j = x_{\tilde k}$ for $j \geq \tilde k$, and we have trivially strong
  convergence of $(x_k)$ to $\bar{x} = x_{\tilde k}$ (which, in this case, is a
  solution of $F(x) = y$). \\
  {\bf Case II:} \ $F(x_k)\neq y$, for all $k$. \\
  It follows from \eqref{eq:lw.ball} and Proposition~\ref{pr:fnz} that
  $F'(x_k)^* F_0(x_k) \neq 0$ for all $k$. 
  According to~\eqref{eq:plw.b} 
  \begin{equation} \label{eq:xk-th-lb}
  \lambda_k \, := \, (1-\eta) \, \norm{F_0(x_k)}^2 \, \norm{F'(x_k)^*F_0(x_k)}^{-2}.
  \end{equation}
  Since $0 < \theta_k \leq b < 2$ for all $k$,
  $(2-\theta_k) \theta_k \geq (2-b) \theta_k > 0$, for all $k$. Therefore,
  it follows from Proposition~\ref{pr:plw.mon} that
  \begin{align*}
  \norm{x^\star-x_k}^2 + (2-b) \, \theta_k \, (1-\eta)^2  \sum_{j=0}^{k-1}
    \left(\dfrac{\norm{F_0(x_j)}^2}{\norm{F'(x_j)^*F_0(x_j)}}\right)^2
    \ \leq \ \norm{x^\star-x_0}^2 ,
  \end{align*}
  for all $x^\star\in B_\rho(x_0) \cap F^{-1}(y)$ and all $k\geq 1$. Consequently,
  using the definition of $\lambda_k$, we obtain
  \begin{align} \label{eq:sf}
    (1-\eta)^2 \sum_{k=0}^\infty
    \theta_k \dfrac{\norm{F_0(x_k)}^4}{\norm{F'(x_k)^*F_0(x_k)}^2} \ = \
    (1-\eta) \sum_{k=0}^\infty
    \theta_k\lambda_k\norm{F_0(x_k)}^2 \ < \ \infty \, , 
  \end{align}
  which, in particular, proves~\eqref{eq:sf.t}.
  
  If $\sum \theta_k\lambda_k<\infty$ then $\sum \norm{x_k - x_{k+1}} < \infty$
  (due to \eqref{eq:plw.a} and \textbf{A1}) and $(x_k)$ is a Cauchy sequence.

  Suppose that $\sum\theta_k\lambda_k = \infty$. It follows from \eqref{eq:sf}
  that $\lim\inf\norm{F_0(x_k)}=0$. Since we are in Case II, the sequence
  $(\norm{F_0(x_k)})$ is strictly positive and there exists a subsequence
  $(x_{\ell_i})$ satisfying
  \begin{equation} \label{eq:mprop-csarg}
  0 \leq k \leq \ell_i \ \Rightarrow \ \norm{F_0(x_k)} \ \geq \ \norm{F_0(x_{\ell_i})} \, .
  \end{equation}
  For all $k \in \N$ and $z \in B_\rho(x_0)$,
  \begin{align}
    \norm{x_k - z}^2
    & = \norm{x_{k+1} - z}^2 - \norm{x_k - x_{k+1}}^2
        - 2\inner{x_k - x_{k+1}}{x_k -z}  \nonumber \\
    & \leq \norm{x_{k+1} -z}^2 - 2\inner{x_k - x_{k+1}}{x_k - z} \nonumber \\
    & = \norm{x_{k+1} - z}^2
        + 2\theta_k \lambda_k \inner{F'(x_k)^*F_0(x_k)}{x_k - z} \nonumber \\
    & \leq \norm{x_{k+1} -z}^2 + 8 \lambda_k
      ( \norm{F_0(x_k)}^2 + \norm{F_0(x_k)} \norm{F_0(z)} ) \, , \label{eq:ufa!} 
  \end{align}
  where the second equality follows from \eqref{eq:plw.a} and the last
  inequality follows from Proposition~\ref{pr:new}, item~\ref{it:2},
  and the assumption $\eta < 1$.
  Thus, taking $z = x_{\ell_i}$ in \eqref{eq:ufa!}, we obtain
  \begin{align*}
    \norm{x_k - x_{\ell_i}}^2 \ \leq \ \norm{x_{k+1}-x_{\ell_i}}^2
    + 16 \lambda_k \norm{F_0(x_k)^2} \, , \ {\rm for} \ 0 \leq k < \ell_i \, .
  \end{align*}
  Define $s_m = \sum_{k\geq m} \theta_k \lambda_k \norm{F_0(x_k)}^2$. It follows from
  \eqref{eq:sf} that $\lim\limits_{m\to\infty} s_m = 0$.
  If $0 \leq k < \ell_i$, by adding the above inequality for
  $j = k$, $k+1$, $\ldots$, $\ell_i-1$, we get
  \begin{align*}
    \norm{x_k-x_{\ell_i}}^2 \ \leq \ 16 \sum_{j=k}^{\ell_i-1} \lambda_j \norm{F_0(x_j)}^2
    \ \leq \ 16 s_k \, .
  \end{align*}
  Now, take $k<j$. There exists $\ell_i>j$. Since $s_k>s_j$,
  $$
  \norm{x_k-x_j} \ \leq \ \norm{x_k-x_{\ell_i}} + \norm{x_j-x_{\ell_i}}
  \ \leq \ 4 \sqrt{s_k} + 4\sqrt{s_j} \ \leq \ 8 \sqrt{s_k} \, .
  $$
  Therefore, $(x_k)$ is a Cauchy sequence and converges to some element $\bar{x} \in
  \overline{B_\rho(x_0)}$.
\end{proof}

\begin{proof}\mbox{\it (Theorem~\ref{th:lw.lim}) }
  It follows from the assumptions of statement (a), from Theorem~\ref{th:lw.sc}, and
  from \textbf{A1} that $(x_k)$ converges to some $\bar{x} \in B_\rho(x_0)$ and that
  \[
  0=\lim_{k\to\infty}\dfrac{\norm{F_0(x_k)}^4}{\norm{F'(x_k)^*F_0(x_k)}^2} \geq
  \lim\sup_{k\to\infty}\dfrac{\norm{F_0(x_k)}^2}{C^2}.
  \]  
  Assertion (a) follows now from Proposition~\ref{pr:fnz}.

  To prove item (b), first use Cauchy-Schwarz inequality to obtain
  \begin{align*}
    0 < \dfrac{\norm{F'(x_k)^*F_0(x_k)}^4}{\norm{F_0(x_k)}^2{\norm{F'(x_k) F'(x_k)^*
          F_0(x_k)}^2}}
    \leq
    \dfrac{\norm{F'(x_k)^*F_0(x_k)}^4}
    {\inner{F_0(x_k)}{F'(x_k) F'(x_k)^*
      F_0(x_k)}^2}=1
  \end{align*}
  Therefore, $0<\theta_k\leq (1-\eta)^{-1}<2$ for all $k$ and it follows from
  Theorem~\ref{th:lw.sc}, the definition of $\theta_k$ and
  from \textbf{A1} that $(x_k)$ converges to some
  $\bar{x} \in B_\rho(x_0)$ and that
  \[
  0=\lim_{k\to\infty}\dfrac{\norm{F_0(x_k)}^2\norm{F'(x_k)^*F_0(x_k)}^2}
  {\norm{F'(x_k)F'(x_k)^*F_0(x_k)}^2} \geq
  \lim\sup_{k\to\infty}\dfrac{\norm{F_0(x_k)}^2}{C^2}.
  \]  
  Assertion (b)
  follows now from Proposition~\ref{pr:fnz}. \\
  It follows from the assumptions of statement (c) that $0 < \theta_k < (1-\eta)^{-1} < 2$.
  From this point on, the proof of statement (c) is analogous to the proof of
  statement (b). \\
  It follows from the assumptions of statement (d) that $0 < \theta_k < 2$. As before,
  the proof of statement (d) is analogous to the proof of statement (b).
\end{proof}

\begin{remark}
The argument used to establish strong convergence of sequence $(x_k)$ in the proof
of Theorem~\ref{th:lw.sc} is inspired in the technique used in
\cite[Theorem~2.3]{HNS95} to prove an analog result for the nonlinear Landweber
iteration.
Both proofs rely on a \emph{Cauchy sequence argument} (it is necessary to prove that
$(x_k)$ is a Cauchy sequence).
In \cite{HNS95}, given $j \geq k$ arbitrarily large, an element $j \geq l \geq k$
is chosen with a minimal property (namely, $\| F_0(x_l) \| \leq \| F_0(x_i) \|$,
for $k \leq i \leq j$). 
 In the proof of Theorem~\ref{th:lw.sc}, the auxiliary indexes $\ell_i$ defined in
 \eqref{eq:mprop-csarg} play a similar role. These indexes are also chosen according
 to a minimizing property, namely,  the subsequence $( \| F_0(x_{\ell_j}) \| )$
 is monotone non-increasing.
\end{remark}

\section{Convergence analysis: noisy data}
\label{sec:pli}

In this section we analyse the family of relaxed projected Landweber methods
in the noisy data case and investigate convergence properties.
We assume that only noisy data $y^\delta \in Y$ satisfying
\eqref{eq:noisy-i} are available, where the noise level $\delta > 0$ is known.
Recall that to simplify the presentation we are using notation
\eqref{eq:fdelta}, i.e., $F_\delta(x) = F(x) - y^\delta$.

Since $F_0(\cdot) = F(\cdot) - y$ is not available, one can not compute the
projection onto $H_x$ (defined in Section~\ref{sec:ple}).
Define, instead, for each $x \in B_\rho(x_0)$, the set
\begin{multline}
  \label{eq:lw.cvx.d}
  H^\delta_x \ := \ \Big\{ z \in X \; \Big| \; \inner{z-x}{F'(x)^*F_\delta(x)} \ \leq
  \\ \leq \
  - \norm{F_\delta(x)} \Big( (1-\eta) \Norm{F_\delta(x)} - (1+\eta) \delta \Big) \Big\} \, .  
\end{multline}
Next we prove a ``noisy'' version of the separation Lemma~\ref{lm:lw}:
$H^\delta_x$ contains all exact solutions of $F(x)=y$ (within $B_\rho(x_0)$) and,
if the residual $\norm{F_\delta(x)}$ is above the threshold
$(1+\eta)(1-\eta)^{-1}\delta$, then $H^\delta_x$ does not contain $x$.

\begin{lemma}[Separation]
  \label{lm:pi.0}
  Suppose that \textbf{A1} and \textbf{A2} hold. If $x \in B_\rho(x_0)$, then
  \begin{equation} \label{eq:ldw.S}
  0 \geq \norm{F_\delta(x)} \big[ (1 - \eta)\, \norm{F_\delta(x)}
         - (1+\eta)\, \delta \big]
    + \inner{x^\star-x}{F'(x)^*F_\delta(x)} \, ,
  \end{equation}
  for all $x^\star\in B_\rho(x_0)\cap F^{-1}(y)$.
  Consequently, $B_\rho(x_0) \cap F^{-1}(y) \subset H^\delta_x$.
\end{lemma}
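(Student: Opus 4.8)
The plan is to prove the displayed inequality \eqref{eq:ldw.S}; the stated inclusion $B_\rho(x_0)\cap F^{-1}(y)\subset H^\delta_x$ then follows at once, since \eqref{eq:ldw.S} is exactly the defining inequality of $H^\delta_x$ in \eqref{eq:lw.cvx.d}, merely rearranged by moving the inner product term to the other side. So it suffices to fix $x^\star\in B_\rho(x_0)\cap F^{-1}(y)$ and estimate the inner product, which I would rewrite via the adjoint as $\inner{x^\star-x}{F'(x)^*F_\delta(x)}=\inner{F'(x)(x^\star-x)}{F_\delta(x)}$.

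The key tool is Proposition~\ref{pr:tcc.e}, which is precisely the TCC written in the $F_\delta$ notation. Setting $v:=y-y^\delta-F_\delta(x)$ and $w:=v-F'(x)(x^\star-x)$, that proposition gives $\norm{w}\leq\eta\norm{v}$. Moreover $v=-F_0(x)$, so $\norm{v}=\norm{F_0(x)}=\norm{F_\delta(x)-(y-y^\delta)}\leq\norm{F_\delta(x)}+\delta$ by the triangle inequality and \eqref{eq:noisy-i}. Writing $F'(x)(x^\star-x)=v-w$ and splitting the inner product yields
$$\inner{F'(x)(x^\star-x)}{F_\delta(x)}=\inner{v}{F_\delta(x)}-\inner{w}{F_\delta(x)}.$$

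I would then bound the two terms separately. For the first, $v=(y-y^\delta)-F_\delta(x)$ gives $\inner{v}{F_\delta(x)}=\inner{y-y^\delta}{F_\delta(x)}-\norm{F_\delta(x)}^2\leq\delta\,\norm{F_\delta(x)}-\norm{F_\delta(x)}^2$ by Cauchy--Schwarz and \eqref{eq:noisy-i}. For the second, Cauchy--Schwarz together with the bounds $\norm{w}\leq\eta\norm{v}$ and $\norm{v}\leq\norm{F_\delta(x)}+\delta$ gives $-\inner{w}{F_\delta(x)}\leq\eta(\norm{F_\delta(x)}+\delta)\,\norm{F_\delta(x)}$. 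Adding these and factoring out $\norm{F_\delta(x)}$ collapses everything to $\norm{F_\delta(x)}\big[(1+\eta)\delta-(1-\eta)\norm{F_\delta(x)}\big]$, which is exactly the right-hand side of \eqref{eq:ldw.S} with the sign rearranged; the degenerate case $F_\delta(x)=0$ is trivial since both sides then vanish. The only place requiring care — I would not call it a genuine obstacle — is the bookkeeping of the $\delta$-terms: the $\delta$ coming from $\inner{y-y^\delta}{F_\delta(x)}$ and the $\eta\delta$ coming from the triangle estimate on $\norm{v}$ must combine to exactly $(1+\eta)\delta$, while $-\norm{F_\delta(x)}^2$ must combine with $\eta\norm{F_\delta(x)}^2$ to give $-(1-\eta)\norm{F_\delta(x)}^2$. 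Once these coefficients are tracked correctly, the estimate is immediate.
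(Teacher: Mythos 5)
Your proof is correct and follows essentially the same route as the paper's: the decomposition via $v=-F_0(x)$ and $w=v-F'(x)(x^\star-x)$ is exactly the paper's splitting of $\inner{F_\delta(x)}{F'(x)(x^\star-x)}$ into the TCC remainder term, the noise term $\inner{F_\delta(x)}{y-y^\delta}$, and $-\norm{F_\delta(x)}^2$, with the same triangle-inequality bound $\norm{F_0(x)}\leq\norm{F_\delta(x)}+\delta$ closing the argument. Invoking Proposition~\ref{pr:tcc.e} rather than the raw TCC \eqref{eq:tcc} is only a cosmetic difference, so no further comparison is needed.
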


\begin{proof}
  Indeed, for $x^\star\in B_\rho(x_0)\cap F^{-1}(y)$ we have
  \begin{align*}
    \langle F'(x)^* & F_\delta(x) , \, x^\star-x \rangle
      = \    \inner{F_\delta(x)}{F'(x)(x^\star-x)} \\
    & = \    \inner{F_\delta(x)}{F_\delta(x)+F'(x)(x^\star-x)}-\norm{F_\delta(x)}^2 \\
    & = \    \inner{F_\delta(x)}{F_0(x)+F'(x)(x^\star-x)} +\inner{F_\delta(x)}{y-y^\delta}
             \norm{F_\delta(x)}^2 \\
    & \leq \ \norm{F_\delta(x)} \ \eta\, \norm{F_0(x)} + \norm{F_\delta(x)}\, \delta
           - \norm{F_\delta(x)}^2
  \end{align*}
  where the first inequality follows from Cauchy-Schwarz inequality and \eqref{eq:tcc}.
  Since $\norm{F_0(x)}\leq \norm{F_\delta(x)}+\delta$,
  \[
  \inner{F'(x)^*F_\delta(x)}{x^\star-x} \leq 
  \eta\norm{F_\delta(x)} (\norm{F_\delta(x)}+\delta) + \norm{F_\delta(x)}\, \delta
  - \norm{F_\delta(x)}^2
  \]
  which is equivalent to  \eqref{eq:ldw.S}.
\end{proof}

Since $\norm{F_\delta(x)}>(1+\eta)(1-\eta)^{-1}\delta$ is sufficient for
separation of $x$ from $F^{-1}(y)$ in $B_\rho(x_0)$ via $H_x^\delta$, this
condition also guarantees $F'(x)^*F_\delta(x)\neq 0$. 

The iteration formula for the family of relaxed projection Landweber methods
in the noisy data case is given by
\begin{equation}
  \label{eq:plwE}
  x_{k+1}^\delta \ := \ x_k^\delta -
  \theta_k \dfrac{ p_{\delta}(\norm{F_\delta(x_k^\delta)})}
           {\norm{F'(x_k^\delta)^* F_\delta(x_k^\delta)}^2}
           F'(x_k^\delta)^* F_\delta(x_k^\delta) \, , \ \theta_k \in (0,2) \, ,
\end{equation}
where
\begin{equation} \label{def:tau-a-b.p}
  p_{\delta}(t) \ := \ t \, ( (1-\eta)t - (1+\eta)\delta )
\end{equation}
and the initial guess $x_0^\delta \in X$ is chosen according to \textbf{A1}.
Again, the \emph{PLW method} (for inexact data) is obtained by taking
$\theta_k = 1$, which amounts to define $x_{k+1}^\delta$ as the orthogonal
projection of $x_k^\delta$ onto $H_{x_k^\delta}^\delta$. On the other
hand, the relaxed variants, which use $\theta_k \in (0,2)$, correspond
to setting $x_{k+1}^\delta$ as a relaxed projection of $x_k^\delta$ onto
$H_{x_k^\delta}^\delta$.

Let
\begin{equation} \label{def:tau-a-b.t}
  \tau        \ > \ \dfrac{1+\eta}{1-\eta} \, .
\end{equation}
The computation of the sequence $(x_k^\delta)$ should be stopped at the index
$k_*^\delta \in \N$ defined by the discrepancy principle
\begin{equation}
  \label{def:discrep}
  k_*^\delta \ := \ \max \, \big\{ k \in \N \, ; \ \| F_\delta(x_{j}^\delta) \|
                  > \tau \delta \, , \ j = 0,1,\dots,k-1 \big\} \, .
\end{equation}
Notice that if $\| F_\delta(x_{k}^\delta) \| > \tau \delta$, then
$\| F'(x_k^\delta)^* F_\delta(x_k^\delta) \| \not = 0$. 
This fact is a consequence of Proposition~\ref{pr:new},
item~\ref{it:3}, since $F_\delta$ also satisfies \textbf{A1} and
\textbf{A2}.
Consequently, iteration \eqref{eq:plwE} is well defined for $k = 0,
\dots, k_*^\delta$.

The next two results have interesting consequences. From Proposition~%
\ref{pr:plw.monE} we conclude that ($x_k^\delta)$ does not leave the ball
$B_\rho(x_0)$ for $k = 0, \dots, k_*^\delta$. On the other hand, it follows
from Theorem~\ref{th:plw-noise} that the stopping index
$k_*^\delta$ is finite, whenever $\delta > 0$.

\begin{proposition}
  \label{pr:plw.monE}
  Let \textbf{A1} -- \textbf{A3} hold true and $\theta_k$ be chosen as in
  \eqref{eq:plwE}. If $x_k^\delta \in B_\rho(x_0)$ and $\| F_\delta(x_{k}^\delta) \|
  > \tau \delta$, then
  \begin{align*}
    \norm{x^\star - x_k^\delta}^2 \ \geq \ \norm{x^\star - x_{k+1}^\delta}^2
    + \theta_k(2-\theta_k) \, \left( \dfrac{p_\delta(\norm{F_\delta(x_k^\delta)})}
    {\norm{F'(x_k^\delta)^* F_\delta(x_k^\delta)}} \right)^2 ,
  \end{align*}
  for all $x^\star \in B_\rho(x_0) \cap F^{-1}(y)$.
\end{proposition}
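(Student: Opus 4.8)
The plan is to mirror the proof of Proposition~\ref{pr:plw.mon}, replacing the exact-data half-space $H_{x_k}$ by its noisy counterpart $H_{x_k^\delta}^\delta$ from \eqref{eq:lw.cvx.d} and invoking the separation Lemma~\ref{lm:pi.0} in place of Lemma~\ref{lm:lw}. Writing $g := F'(x_k^\delta)^*F_\delta(x_k^\delta)$ and $c := p_\delta(\norm{F_\delta(x_k^\delta)})$, the defining inequality of $H_{x_k^\delta}^\delta$ reads $\inner{z - x_k^\delta}{g} \leq -c$, so that $H_{x_k^\delta}^\delta$ is the half-space whose boundary hyperplane has outward normal $g$ and lies at distance $c/\norm{g}$ from $x_k^\delta$ (once we know $c>0$).

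First I would check non-degeneracy. The hypothesis $\norm{F_\delta(x_k^\delta)} > \tau\delta$ together with \eqref{def:tau-a-b.t} gives $(1-\eta)\norm{F_\delta(x_k^\delta)} > (1+\eta)\delta$, whence $c = p_\delta(\norm{F_\delta(x_k^\delta)}) > 0$ by \eqref{def:tau-a-b.p}; moreover, as already observed after \eqref{def:discrep}, this same condition forces $g \neq 0$. Hence $H_{x_k^\delta}^\delta$ is a proper half-space and, since $c>0$, its center $x_k^\delta$ lies strictly outside it. The orthogonal projection of $x_k^\delta$ onto this half-space is therefore $\hat x = x_k^\delta - (c/\norm{g}^2)\, g$, and the update \eqref{eq:plwE} is exactly the relaxed projection $x_{k+1}^\delta = x_k^\delta + \theta_k(\hat x - x_k^\delta)$ with relaxation factor $\theta_k \in (0,2)$.

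Next I would invoke Lemma~\ref{lm:pi.0}, which guarantees $x^\star \in H_{x_k^\delta}^\delta$ for every $x^\star \in B_\rho(x_0)\cap F^{-1}(y)$. The conclusion then follows from the standard estimate for relaxed metric projections onto convex sets (the same \cite[Lemma~3.13]{VaEr09} used for Proposition~\ref{pr:plw.mon}): for a closed convex set $H$, a point $x^\star \in H$, the orthogonal projection $\hat x$ of a point $x$ onto $H$, and the relaxed iterate $x_+ = x + \theta(\hat x - x)$ with $\theta\in(0,2)$, one has $\norm{x^\star - x}^2 \geq \norm{x^\star - x_+}^2 + \theta(2-\theta)\norm{\hat x - x}^2$. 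It only remains to evaluate the displacement, $\norm{\hat x - x_k^\delta} = c/\norm{g} = p_\delta(\norm{F_\delta(x_k^\delta)})/\norm{F'(x_k^\delta)^*F_\delta(x_k^\delta)}$, which upon squaring reproduces the asserted term. The only point requiring care is the positivity $c>0$ together with $g\neq 0$ from the second paragraph, since without them $\hat x$ would not be the genuine projection and the displacement formula would break down; both are secured by the discrepancy threshold $\tau\delta$, so no real obstacle remains and the argument is essentially a transcription of the exact-data case.
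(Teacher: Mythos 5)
Your proposal is correct and follows essentially the same route as the paper: identify $x_{k+1}^\delta$ as a relaxed orthogonal projection of $x_k^\delta$ onto the half-space $H^\delta_{x_k^\delta}$, use Lemma~\ref{lm:pi.0} to place $x^\star$ inside that set, and conclude via the standard Fej\'er-type estimate for relaxed projections. Your explicit verification of non-degeneracy ($c>0$ and $g\neq 0$ from the discrepancy threshold) and of the displacement formula $c/\norm{g}$ simply spells out details the paper leaves implicit.
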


\begin{proof}
  If $x_k^\delta \in B_\rho(x_0)$ and $\| F_\delta(x_{k}^\delta) \| > \tau \delta$,
  then $x_{k+1}^\delta$ is a relaxed orthogonal projection of $x_k^\delta$ onto
  $H^\delta_{x_k^\delta}$ with a relaxation factor $\theta_k$. The conclusion follows
  from this fact, Lemma~\ref{lm:pi.0}, the iteration formula~\eqref{eq:plwE}, and elementary
  properties of over/under relaxed orthogonal projections.
\end{proof}

\begin{theorem}
  \label{th:plw-noise}
  If \textbf{A1} -- \textbf{A3} hold true, then the sequences $(x_k^\delta)$,
  $(\theta_k)$ as specified in \eqref{eq:plwE} (together with the stopping
  criterion~\eqref{def:discrep}) are well defined and
  \begin{align*}
    x_k\in B_{\rho/2}(x^\star)\subset B_\rho(x_0),\;\;\;\forall k\leq k^\delta_*.
  \end{align*}
  Moreover, if $\theta_k\in[a,b]\subset(0,2)$ for all $k\leq k^\delta_*$, then
  this stopping index $k_*^\delta$ defined in \eqref{def:discrep} is finite.
\end{theorem}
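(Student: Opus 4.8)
The plan is to establish the two assertions in the order stated, mirroring the structure of the proof of Theorem~\ref{th:lw.sc}: first an induction on $k$ that simultaneously yields well-definedness of the iteration and the inclusion $x_k^\delta \in B_{\rho/2}(x^\star)$, and then a summability-plus-contradiction argument for the finiteness of $k_*^\delta$. The monotonicity estimate of Proposition~\ref{pr:plw.monE} is the workhorse in both parts.

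First I would prove by induction on $k \leq k_*^\delta$ that $x_k^\delta$ is well defined and lies in $B_{\rho/2}(x^\star)$. The base case is \textbf{A3}, which places $x_0^\delta$ within distance $\rho/2$ of $x^\star$. For the inductive step, if $k < k_*^\delta$ then \eqref{def:discrep} gives $\norm{F_\delta(x_k^\delta)} > \tau\delta$; since $\tau > (1+\eta)/(1-\eta)$, the remark following Lemma~\ref{lm:pi.0} guarantees $F'(x_k^\delta)^* F_\delta(x_k^\delta) \neq 0$, so the denominator in \eqref{eq:plwE} is nonzero and $x_{k+1}^\delta$ is well defined (the iterate being in $B_\rho(x_0)$ is what makes $F'$ available there, via \textbf{A1}). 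Proposition~\ref{pr:plw.monE} then gives $\norm{x^\star - x_{k+1}^\delta} \leq \norm{x^\star - x_k^\delta}$, because $\theta_k(2-\theta_k) \geq 0$ for $\theta_k \in (0,2)$, so the bound $\norm{x^\star - x_{k+1}^\delta} < \rho/2$ propagates. Since $x^\star \in B_{\rho/2}(x_0)$, the triangle inequality yields $B_{\rho/2}(x^\star) \subset B_\rho(x_0)$, completing the step.

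For finiteness I would argue by contradiction, supposing $k_*^\delta = \infty$, i.e.\ $\norm{F_\delta(x_k^\delta)} > \tau\delta$ for every $k$. Then Proposition~\ref{pr:plw.monE} holds for all $k$, and telescoping gives
\[
  \sum_{k=0}^\infty \theta_k(2-\theta_k)
  \left( \dfrac{p_\delta(\norm{F_\delta(x_k^\delta)})}
  {\norm{F'(x_k^\delta)^* F_\delta(x_k^\delta)}} \right)^2
  \ \leq \ \norm{x^\star - x_0^\delta}^2 \ < \ \infty \, .
\]
The crux is to bound each summand below by a fixed positive constant, making an infinite sum impossible. For the relaxation factor, the hypothesis $\theta_k \in [a,b] \subset (0,2)$ together with the concavity of $t \mapsto t(2-t)$ gives $\theta_k(2-\theta_k) \geq \min\{a(2-a),\,b(2-b)\} > 0$. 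For the fraction, \textbf{A1} bounds the denominator by $\norm{F'(x_k^\delta)^* F_\delta(x_k^\delta)} \leq C\,\norm{F_\delta(x_k^\delta)}$, whence, using \eqref{def:tau-a-b.p},
\[
  \dfrac{p_\delta(\norm{F_\delta(x_k^\delta)})}
  {\norm{F'(x_k^\delta)^* F_\delta(x_k^\delta)}}
  \ \geq \ \dfrac{(1-\eta)\norm{F_\delta(x_k^\delta)} - (1+\eta)\delta}{C}
  \ \geq \ \dfrac{\big((1-\eta)\tau - (1+\eta)\big)\,\delta}{C} \ > \ 0 \, ,
\]
where the last inequality uses $\norm{F_\delta(x_k^\delta)} > \tau\delta$ and $\tau > (1+\eta)/(1-\eta)$. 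Combining the two lower bounds makes every summand at least a fixed positive constant, contradicting the finiteness of the sum, so $k_*^\delta < \infty$.

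The main obstacle I expect is extracting the clean, strictly positive lower bound on the fraction: one must pair the \textbf{A1} norm bound in the denominator with the discrepancy threshold $\norm{F_\delta(x_k^\delta)} > \tau\delta$ and the choice \eqref{def:tau-a-b.t} in the numerator to produce a constant multiple of $\delta$. This is precisely where the role of $\delta > 0$ enters—were $\delta = 0$ the lower bound would degenerate to zero—so the argument correctly fails to force finiteness in the exact-data case. Everything else is a routine induction and telescoping.
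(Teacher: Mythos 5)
Your proof is correct and follows essentially the same route as the paper: the first part is the same induction via \textbf{A3}, the nonvanishing of $F'(x_k^\delta)^*F_\delta(x_k^\delta)$ above the discrepancy threshold, and the monotonicity of Proposition~\ref{pr:plw.monE}; the second part is the same telescoping of that inequality combined with the lower bound $p_\delta(t)/t \geq \bigl((1-\eta)\tau-(1+\eta)\bigr)\delta$ for $t>\tau\delta$ and the bound $\norm{F'(x_k^\delta)^*F_\delta(x_k^\delta)}\leq C\norm{F_\delta(x_k^\delta)}$ from \textbf{A1}. The only cosmetic differences are that you phrase the conclusion as a contradiction rather than as an explicit bound $\norm{x^\star-x_0^\delta}^2 \geq \frac{a(2-b)}{C^2}(k+1)h^2$ on the index, and you use the lower bound $\min\{a(2-a),b(2-b)\}$ in place of the paper's $a(2-b)$ for $\theta_k(2-\theta_k)$; both are valid.
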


\begin{proof}
  The proof of the first statement is similar to the one in Theorem~\ref{th:lw.sc}.

  To prove the second statement, first observe that since $\theta_k\in[a,b]$,
  $\theta_k(2-\theta_k)\geq a(2-b)>0$. Thus,  it follows from Proposition
  \ref{pr:plw.monE} that for any $k< k^\delta_*$
  \begin{multline*}
    \|x^\star - x_0^\delta \|^2 \ \geq \ 
    a(2-b) \sum_{j=0}^k  \left( \dfrac{p_\delta(\norm{F_\delta(x_k^\delta)})}
    {\norm{F'(x_k^\delta)^* F_\delta(x_k^\delta)}} \right)^2 \\
 \geq \
 \dfrac{a(2-b)}{C^2} \sum_{j=0}^k  \left( \dfrac{p_\delta(\norm{F_\delta(x_k^\delta)})}
    {\norm{F_\delta(x_k^\delta)}} \right)^2 .
  \end{multline*}
  Observe that, if $t>\tau\delta$, then
  \begin{align*}
    \dfrac{p_\delta(t)}{t}=(1-\eta)t-(1+\eta)\delta>
    \left[\tau-\dfrac{1+\eta}{1-\eta}\right](1-\eta)
    \delta=:h>0.
  \end{align*}
  Therefore, for any $k < k^\delta_*$
  \begin{align*}
     \|x^\star - x_0^\delta \|^2 
   \geq 
    \dfrac{a(2-b)}{C^2}(k+1)h^2,
  \end{align*}
  so that $k^\delta_*$ is finite.
  \end{proof}

It is worth noticing that the Landweber method for noisy data
\cite[Chap.11]{EHN96}
(which requires $\eta < 1/2$, $C \leq 1$ in \textbf{A1} -- \textbf{A2})
using the discrepancy principle \eqref{def:discrep} with 
$$
\tau > 2\,\dfrac{1+\eta}{1-2\eta} > \dfrac{1+\eta}{1-\eta} \, ,
$$
corresponds to the PLW method, analyzed in Theorem~\ref{th:plw-noise}, with
$$
0 \ < \ \dfrac{p_\delta(\tau\delta)}{\rho^2}
\ \leq \
\theta_k \ = \
\dfrac{\norm{F'(x_k^\delta)^* F_\delta(x_k^\delta)}^2}
      {p_\delta(\norm{F_\delta(x_k^\delta)})}
\ \leq \
\dfrac{\tau}{(1-\eta)\tau - (1+\eta)} < 2
$$
(here the second inequality follows from \textbf{A3} and the third inequality
follows from Lemma~\ref{lm:pi.0}). Consequently, in the noisy data case, the
convergence analysis for the PLW method encompasses the Landweber iteration
(under the TCC condition) as a particular case.

In the next theorem we discuss a stability result, which is an essential
tool to prove the last result of this section, namely Theorem~\ref{th:lw.scE}
(semi-convergence of the PLW method). Notice that this is the first time
were the strong Assumption \textbf{A4} is needed in the text.

\begin{theorem}
  \label{th:lw.stab}
  Let \textbf{A1} -- \textbf{A4} hold true. For each fixed $k \in \N$,
  the element $x_k^\delta$, computed after kth-iterations of any method
  within the family of methods in \eqref{eq:plwE}, depends continuously
  on the data $y^\delta$.
\end{theorem}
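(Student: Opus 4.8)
The plan is to argue by induction on the iteration index $k$, reducing the claim to the continuity of a single update step and isolating the only delicate point, namely the non-vanishing of the denominator appearing in \eqref{eq:plwE}. Fix a reference datum $y^\delta$ and let $k \le k_*^\delta$, with $k_*^\delta$ the discrepancy index \eqref{def:discrep}; it is precisely for such $k$ that the iterates $x_0^\delta, \dots, x_k^\delta$ are actually produced by the method and, by Theorem~\ref{th:plw-noise}, lie in $B_{\rho/2}(x^\star) \subset B_\rho(x_0)$, where \textbf{A1}, \textbf{A2} and \textbf{A4} are available. The base case is immediate, since $x_0^\delta = x_0$ does not depend on the data. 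For the inductive step I would assume that $y^\delta \mapsto x_j^\delta$ is continuous for $j \le k-1$ and show that $y^\delta \mapsto x_k^\delta$ is continuous as well.

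For the inductive step, write the update \eqref{eq:plwE} as $x_k^\delta = x_{k-1}^\delta - \theta_{k-1}\, G(y^\delta)$, where $G$ collects the residual, the derivative, its adjoint, the scalar $p_\delta$ from \eqref{def:tau-a-b.p}, and the normalizing denominator. Each building block is continuous in $y^\delta$ once $y^\delta \mapsto x_{k-1}^\delta$ is known to be: the residual $y^\delta \mapsto F_\delta(x_{k-1}^\delta) = F(x_{k-1}^\delta) - y^\delta$ is continuous by \textbf{A1} and the inductive hypothesis, and the scalar $p_\delta(\norm{F_\delta(x_{k-1}^\delta)})$ is continuous since $p_\delta$ is a fixed polynomial and the norm is continuous. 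The crucial use of \textbf{A4} is here: continuous Fr\'echet differentiability on $B_\rho(x_0)$ means $x \mapsto F'(x)$ is continuous into the bounded operators, hence so is $x \mapsto F'(x)^*$ (passing to the adjoint is an isometry in operator norm), and composing with the continuous map $y^\delta \mapsto x_{k-1}^\delta$ shows that both $y^\delta \mapsto F'(x_{k-1}^\delta)$ and $y^\delta \mapsto F'(x_{k-1}^\delta)^* F_\delta(x_{k-1}^\delta)$ are continuous. Taking the squared norm of the latter supplies a continuous denominator.

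The hard part will be to rule out division by zero and, more importantly, to control the denominator uniformly for data near the reference datum. Since $j \le k-1 \le k_*^\delta - 1$, the stopping rule \eqref{def:discrep} gives $\norm{F_\delta(x_{k-1}^\delta)} > \tau\delta$ at the reference datum. Combining the separation inequality \eqref{eq:ldw.S} of Lemma~\ref{lm:pi.0} (with $x = x_{k-1}^\delta$) with the Cauchy--Schwarz inequality and the bound $\norm{x^\star - x_{k-1}^\delta} \le \rho$ yields the strictly positive lower bound $\norm{F'(x_{k-1}^\delta)^* F_\delta(x_{k-1}^\delta)} \ge p_\delta(\norm{F_\delta(x_{k-1}^\delta)})/\rho > 0$, the positivity being guaranteed by $p_\delta(\tau\delta) > 0$, which holds because of \eqref{def:tau-a-b.t}. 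By the continuity established in the previous paragraph, for every datum $\tilde y$ in a small neighborhood of $y^\delta$ the residual norm still exceeds $\tau\delta$ and this lower bound persists, so the denominator stays bounded away from zero and $G$ is well defined and continuous there; this simultaneously ensures that the method performs the same first $k$ steps for $\tilde y$ and keeps its iterates inside $B_\rho(x_0)$ via Theorem~\ref{th:plw-noise}. Consequently $y^\delta \mapsto x_k^\delta = x_{k-1}^\delta - \theta_{k-1} G(y^\delta)$ is continuous at the reference datum, which closes the induction.

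I expect the whole argument to be routine once this denominator bound is in place; the single genuine obstacle is exactly the interaction between the stopping rule and the perturbation, i.e.\ guaranteeing that a neighborhood of $y^\delta$ does not cross the discrepancy threshold during the first $k$ steps. Should one wish to let the noise level vary together with $y^\delta$, the only modification is to observe that $p_\delta$ in \eqref{def:tau-a-b.p} is jointly continuous in $(\delta,t)$ and that the strict threshold \eqref{def:tau-a-b.t} keeps the inequality $\norm{F_\delta(x_{k-1}^\delta)} > \tau\delta$ stable under small joint perturbations, so that the same reasoning applies verbatim.
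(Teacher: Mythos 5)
Your proposal is correct and follows essentially the same route as the paper: the paper defines the one-step update map $\varphi(x,y^\delta,\delta)$ on the set where $F'(x)^*(F(x)-y^\delta)\neq 0$, notes it is continuous there by \textbf{A1} and \textbf{A4}, and concludes by composing $k$ times, which is exactly your induction. Your only additions are making explicit the lower bound on the denominator via Lemma~\ref{lm:pi.0} and Cauchy--Schwarz and the openness of the discrepancy condition, details the paper leaves implicit in the phrase ``whenever the iterate is well defined.''
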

\begin{proof}
From \eqref{def:tau-a-b.t}, \textbf{A1}, \textbf{A4} and Theorem~\ref{th:plw-noise},
it follows that the mapping $\varphi: D(\varphi) \to X$ with
\begin{align*}
& D(\varphi) := \{ (x,y^\delta,\delta) | x \in D(F) ;
                   \, \delta > 0 ;
                   \, \norm{y^\delta - y} \leq \delta ;
                   \, F'(x)^*(F(x) - y^\delta) \neq 0 \} , \\
& \varphi(x,y^\delta,\delta) := x - \dfrac{p_\delta(\norm{F(x)-y^\delta})}
                                    {\norm{F'(x)^* (F(x)-y^\delta)}^2}
                                    \, F'(x)^* (F(x)-y^\delta)
\end{align*}
is continuous on its domain of definition. Therefore, whenever the iterate
$x_k^\delta = \big( \varphi(\cdot,y^\delta,\delta) \big)^k(x_0)$ is well defined,
it depends continuously on $(y^\delta, \delta$).
\end{proof}

Theorem~\ref{th:lw.stab} together with Theorems~\ref{th:lw.sc} and~\ref{th:lw.lim}
are the key ingredients in the proof of Theorem~\ref{th:lw.scE}, which guarantees
that the stopping rule \eqref{def:discrep} renders the PLW iteration a regularization
method. The proof of Theorem~\ref{th:lw.scE} uses classical techniques from the
analysis of Landweber-type iterative regularization techniques (see, e.g.,
\cite[Theorem~11.5]{EHN96} or \cite[Theorem~2.6]{KNS08}) and thus is omitted.

\begin{theorem}
  \label{th:lw.scE}
  Let \textbf{A1} -- \textbf{A4} hold true, $\delta_j \to 0$ as $j\to\infty$, and
  $y_j := y^{\delta_j} \in Y$ be given with $\norm{y_j - y} \leq \delta_j$. If the PLW
  iteration \eqref{eq:plwE} is stopped with $k_*^j := k_*^{\delta_j}$ according to the
  discrepancy \eqref{def:discrep}, then $(x_{k_*^j}^\delta)$ converges strongly to a
  solution $\bar{x} \in B_\rho(x_0)$ of $F(x)=y$ as $j\to\infty$.
\end{theorem}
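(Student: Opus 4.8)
The plan is to combine the three pillars already established: strong convergence of the exact-data iteration (Theorem~\ref{th:lw.lim}(a) with $\theta_k\equiv1$), the stability/continuity of each fixed iterate in the data (Theorem~\ref{th:lw.stab}), and the monotonicity of the iteration error up to the stopping index (Proposition~\ref{pr:plw.monE}). Write $(x_k)$ for the exact-data PLW iteration started at $x_0$; by Theorem~\ref{th:lw.lim}(a) it converges strongly to some $\bar x\in B_\rho(x_0)$ with $F(\bar x)=y$, so in particular $\bar x\in B_\rho(x_0)\cap F^{-1}(y)$ is an admissible reference point for the monotonicity estimate. Let $m:=\min\{k\in\N: F(x_k)=y\}\in\N\cup\{+\infty\}$; by Proposition~\ref{pr:fnz} the exact iteration is non-stationary (equivalently $F'(x_i)^*F_0(x_i)\neq0$) exactly for $i<m$, and is stationary thereafter, so $\bar x=x_m$ whenever $m<\infty$.

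The first, and technically most delicate, step is a lower bound on the stopping index: for every fixed $N\leq m$ one has $k_*^{\delta_j}\geq N$ for all sufficiently large $j$. I would argue by contradiction together with a pigeonhole extraction: if this fails, some fixed value $k<N\leq m$ is attained infinitely often as $k_*^{\delta_j}$. Since $i<k<m$ forces $F(x_i)\neq y$, the exact iterates $x_0,\dots,x_{k-1}$ lie in the interior of the domain $D(\varphi)$ of the step map from Theorem~\ref{th:lw.stab}; because $\|y^{\delta_j}-y\|\leq\delta_j\to0$ and $\varphi(\cdot,y,0)$ reproduces the exact PLW step, continuity of the $k$-fold composition yields $x_k^{\delta_j}\to x_k$ along this subsequence. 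The discrepancy principle \eqref{def:discrep} gives $\|F_{\delta_j}(x_k^{\delta_j})\|\leq\tau\delta_j$ at the stopping index, and passing to the limit produces $\|F_0(x_k)\|=0$, i.e.\ $F(x_k)=y$ with $k<m$, contradicting the minimality of $m$. The crux here is precisely verifying that the iteration never approaches a zero of $F'(x)^*F_0(x)$ before index $m$, which is what keeps the denominators in $\varphi$ bounded away from $0$ and makes the continuity argument legitimate.

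With this bound in hand the two remaining cases close quickly using monotonicity with respect to $\bar x$. If $m<\infty$, then $\bar x=x_m$, the step above gives $k_*^{\delta_j}\geq m$ for large $j$, and Proposition~\ref{pr:plw.monE} (applicable since $\|F_{\delta_j}(x_k^{\delta_j})\|>\tau\delta_j$ for $m\leq k<k_*^{\delta_j}$) yields $\|\bar x-x_{k_*^{\delta_j}}^{\delta_j}\|\leq\|\bar x-x_m^{\delta_j}\|$; since $x_m^{\delta_j}\to x_m=\bar x$ by continuity, the left-hand side tends to $0$. If $m=\infty$, fix $\epsilon>0$ and choose $N$ with $\|x_N-\bar x\|<\epsilon$; the lower bound gives $k_*^{\delta_j}\geq N$ eventually, monotonicity gives $\|\bar x-x_{k_*^{\delta_j}}^{\delta_j}\|\leq\|\bar x-x_N^{\delta_j}\|$, and continuity gives $x_N^{\delta_j}\to x_N$, so $\limsup_j\|\bar x-x_{k_*^{\delta_j}}^{\delta_j}\|\leq\|\bar x-x_N\|<\epsilon$. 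As $\epsilon$ is arbitrary, $x_{k_*^{\delta_j}}^{\delta_j}\to\bar x$ strongly, which is the claim.

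I expect the main obstacle to be the continuity step, and I would flag two points worth care: (i) identifying the limit of $x_k^{\delta_j}$ with the exact iterate $x_k$, which relies on $\varphi(\cdot,y,0)$ coinciding with the noise-free update \eqref{eq:plw} at $\theta_k=1$ together with $y^{\delta_j}\to y$; and (ii) ensuring all intermediate iterates remain within $D(\varphi)$ so that Theorem~\ref{th:lw.stab} genuinely applies, which is exactly why the indices must be restricted to the non-stationary range $k<m$ and why the contradiction in the lower-bound step is self-consistent.
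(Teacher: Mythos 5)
Your proof is correct and follows essentially the route the paper intends: the paper omits this proof, saying only that it uses the classical technique of \cite[Theorem~11.5]{EHN96} and \cite[Theorem~2.6]{KNS08} with Theorem~\ref{th:lw.stab} and Theorems~\ref{th:lw.sc}--\ref{th:lw.lim} as the key ingredients, and your argument (stability of each fixed iterate, a pigeonhole/contradiction lower bound on $k_*^{\delta_j}$ via the discrepancy inequality, then monotonicity from Proposition~\ref{pr:plw.monE} with reference point $\bar{x}$) is precisely a careful reconstruction of that technique. The only point worth polishing is the one you already flag: $D(\varphi)$ in Theorem~\ref{th:lw.stab} is written with $\delta>0$, so one should note explicitly that $p_\delta$, and hence $\varphi$, extends continuously to $\delta=0$ wherever $F'(x)^*F_0(x)\neq 0$, which is guaranteed along $x_0,\dots,x_{k-1}$ for $k<m$ by Proposition~\ref{pr:fnz}.
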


It is immediate to verify that the result in Theorem~\ref{th:lw.scE} extend to any
method within the family of relaxed projection Landweber methods \eqref{eq:plwE}.

\section{Numerical experiments} \label{sec:numerics}

In what follows we present numerical experiments for the iterative methods
derived in previous sections.
The PLW method is implemented for solving an exponentially ill-posed inverse problem
related to the Dirichlet to Neumann operator and its performance is compared against
the benchmark methods LW and SD.

\subsection{Description of the mathematical model} \label{ssec:num-description}

We briefly introduce a model which plays a key rule in inverse doping problems
with current flow measurements, namely the 2D {\em linearized stationary bipolar
model close to equilibrium}.

This mathematical model is derived from the drift diffusion equations by
linearizing the Voltage-Current (VC) map at $U \equiv 0$ \cite{Le06,BELM04},
where the function $U = U(x)$ denotes the applied potential to the semiconductor
device.%
\footnote{This simplification is motivated by the fact that, due to hysteresis
effects for large applied voltage, the VC-map can only be defined as a single-valued
function in a neighborhood of $U=0$.}
Additionally, we assume that the electron mobility $\mu_n(x) = \mu_n > 0$ as
well as the hole mobility $\mu_p(x) = \mu_p > 0$ are constant and that no
recombination-generation rate is present \cite{LMZ06,LMZ06a}.
Under the above assumptions the Gateaux derivative of the VC-map $\Sigma_C$ at
the point $U=0$ in the direction $h \in H^{3/2}(\partial\Omega_D)$ is given by
\begin{equation} \label{eq:def-sigma-prime-C}
\Sigma'_C(0) h \ = \ \mu_n \, e^{V_{\rm bi}} \hat{u}_\nu
                   - \mu_p \, e^{-V_{\rm bi}} \hat{v}_\nu
               \ \in \ H^{1/2}(\Gamma_1) \, ,
\end{equation}
where the concentrations of electrons and holes $(\hat{u}, \hat{v})$ solve%
\footnote{These concentrations are here written in terms of the Slotboom variables
\cite{LMZ06a}.}
\begin{subequations}  \label{eq:bipol-stat}  \begin{eqnarray}
{\rm div}\, (\mu_n e^{V^0} \nabla \hat{u})   & \hskip-1.7cm
 = \ 0                \label{eq:bipol-statA} & {\rm in}\ \Omega \\
{\rm div}\, (\mu_p e^{-V^0} \nabla \hat{v})  & \hskip-1.7cm
 = \ 0                \label{eq:bipol-statB} & {\rm in}\ \Omega \\
\hat{u} & \hskip-0.1cm
 = \ -\hat{v} \ = \ -h                       & {\rm on}\ \partial\Omega_D \\
\nabla\hat{u} \cdot\nu &
 = \ \nabla\hat{v} \cdot\nu \ = \ 0          & {\rm on}\ \partial\Omega_N
\end{eqnarray} \end{subequations}
and the potential $V^0$ is the solution of the thermal equilibrium problem
\begin{subequations}  \label{eq:equil-case} \begin{eqnarray}
\lambda^2 \, \Delta V^0 &                       \label{eq:equil-caseA}
   = \ e^{V^0} - e^{-V^0} - C(x)              & {\rm in}\ \Omega \\
V^0 & \hskip-2.1cm                              \label{eq:equil-caseB}
   = \ V_{\rm bi}(x)                          & {\rm on}\ \partial\Omega_D \\
\nabla V^0 \cdot \nu & \hskip-2.95cm            \label{eq:equil-caseC}
   = \ 0                                      & {\rm on}\ \partial\Omega_N \, .
\end{eqnarray} \end{subequations}
Here $\Omega \subset \R^2$ is a domain representing the semiconductor
device; the boundary of $\Omega$ is divided into two nonempty disjoint parts:
$\partial\Omega = \overline{\partial\Omega_N} \cup \overline{\partial\Omega_D}$.
The Dirichlet boundary part $\partial\Omega_D$ models the Ohmic contacts,
where the potential $V$ as well as the concentrations $\hat{u}$ and $\hat{v}$
are prescribed; the Neumann boundary part $\partial\Omega_N$ corresponds to
insulating surfaces, thus zero current flow and zero electric field in
the normal direction are prescribed; the Dirichlet boundary part splits
into $\partial\Omega_D = \Gamma_0 \cup \Gamma_1$, where the disjoint curves
$\Gamma_i$, $i = 0$, $1$, correspond to distinct device contacts (differences
in $U(x)$ between segments $\Gamma_0$ and $\Gamma_1$ correspond to the applied
bias between these two contacts). Moreover, $V_{\rm bi}$ is a given logarithmic
function \cite{BELM04}.

The piecewise constant function $C(x)$ is the {\em doping profile} and models
a preconcentration of ions in the crystal, so $C(x) = C_{+}(x) - C_{-}(x)$ holds,
where $C_{+}$ and $C_{-}$ are (constant) concentrations of negative and positive
ions respectively.

In those subregions of $\Omega$ in which the preconcentration of negative
ions predominate (P-regions), we have $C(x) < 0$. Analogously, we define the
N-regions, where $C(x) > 0$ holds.
The boundaries between the P-regions and N-regions (where $C$ changes sign)
are called {\em pn-junctions}; it's determination is a strategic
non-destructive test \cite{LMZ06a, LMZ06}.

\subsection{The inverse doping problem}

The inverse problem we are concerned with consists in determining the
doping profile function $C$ in \eqref{eq:equil-case} from measurements
of the linearized VC-map $\Sigma'_C(0)$ in \eqref{eq:def-sigma-prime-C},
under the assumption $\mu_p = 0$ (the so-called {\em linearized stationary
unipolar model close to equilibrium}). Notice that we can split the inverse
problem in two parts:

\smallskip
{\bf 1)} Define the function $a(x) := e^{V^0(x)}$, $x \in \Omega$, and solve
the parameter identification problem
\begin{equation} \label{eq:num-d2nB}
{\rm div}\, (\mu_n a(x) \nabla \hat u) \ = \ 0 \,\ {\rm in} \ \Omega \quad\quad
\hat u \ = \ - U(x) \,\ {\rm on} \ \partial\Omega_D \quad\quad
\nabla \hat u \cdot \nu \ = \ 0 \,\ {\rm on} \ \partial\Omega_N .
%
%
\end{equation}
for $a(x)$, from measurements of \ 
$\big[ \Sigma'_C(0) \big](U) \, = \, \big( \mu_n a(x) \hat{u}_\nu \big)|_{\Gamma_1}$.
\smallskip
%

{\bf 2)} Evaluate the doping profile \
$C(x) = a(x) - a^{-1}(x) - \lambda^2 \Delta (\ln a(x))$, $x \in \Omega$.

\smallskip \noindent
Since the evaluation of $C$ from $a(x)$ can be explicitly performed in a stable way,
we shall focus on the problem of identifying the function parameter $a(x)$ in
(\ref{eq:num-d2nB}).
Summarizing, the inverse doping profile problem in the linearized stationary unipolar
model (close to equilibrium) reduces to the identification of the parameter function
$a(x)$ in (\ref{eq:num-d2nB}) from measurements of the Dirichlet-to-Neumann map \
$\Lambda_a : H^{1/2}(\partial\Omega_D) \ni U \mapsto
\big( \mu_n a(x) \hat{u}_\nu \big) |_{\Gamma_1} \in H^{-1/2}(\Gamma_1)$.
%

\smallskip In the formulation of the inverse problem we shall take into account
some constraints imposed by the practical experiments, namely:
(i) The voltage profile $U \in H^{1/2}(\partial\Omega_D)$ must satisfy
$U |_{\Gamma_1} = 0$ (in practice, $U$ is chosen to be piecewise constant
on $\Gamma_1$ and to vanish on $\Gamma_0$);
(ii) The identification of $a(x)$ has to be performed from a finite number of
measurements, i.e. from the data $\big\{ (U_i, \Lambda_a(U_i)) \big\}_{i=1}^N
\in \big[ H^{1/2}(\Gamma_0) \times H^{-1/2}(\Gamma_1) \big]^N$.

\smallskip In what follows we take $N = 1$, i.e. identification of $a(x)$
from a single experiment. Thus, we can write this particular inverse doping
problem within the abstract framework of \eqref{eq:inv-probl}
\begin{equation} \label{eq:ip-abstract}
 F(a) \ = \ \Lambda_a(U) \ =: \ y \, ,
\end{equation}
where $U$ is a fixed voltage profile satisfying the above assumptions, \ 
$X := L^2(\Omega) \supset D(F) := \{ a \in L^\infty(\Omega)$;
$0 < a_m \le a(x) \le a_M$, a.e. in $\Omega \}$ \ and \ 
$Y := H^{1/2}(\Gamma_1)$. The operator $F$ above is known to be continuous
\cite{BELM04}.

\subsection{First experiment: The Calderon setup}

In this subsection we consider the special setup $\Gamma_1 = \partial\Omega_D =
\partial\Omega$ (i.e., $\Gamma_0 = \partial\Omega_N = \emptyset$). Up to now, it
is not known whether the map $F$ satisfies the TCC.  However,
\begin{enumerate}
\item the map $a \mapsto u$ (solution of \eqref{eq:num-d2nB}) satisfies the TCC 
with respect to the $H^1(\Omega)$ norm~\cite{KNS08};
\item it was proven in \cite{LR08} that
the discretization of the operator $F$ in \eqref{eq:ip-abstract} using the finite element
method (and basis functions constructed by a Delaunay triangulation) satisfies the TCC
\eqref{eq:tcc}.  
\end{enumerate}
Therefore, the analytical convergence results of the previous sections do apply
to finite-element discretizations of \eqref{eq:ip-abstract} in this special setup.
Moreover, item 1 suggests that $H^1(\Omega)$ is a good choice of parameter space for
TCC based reconstruction methods.
%
Motivated by this fact,  the setup of the numerical experiments presented in this
subsection is chosen as follows:

\smallskip\noindent $\bullet$ The domain $\Omega \subset \mathbb R^2$ is the unit square
$(0,1) \times (0,1)$
and the above mentioned boundary parts are $\Gamma_1 = \partial\Omega_D := \partial\Omega$,
$\Gamma_0 = \partial\Omega_N := \emptyset$.

\smallskip\noindent $\bullet$ The parameter space is $H^1(\Omega)$ and the function
$a^\star(x)(x)$ to be identified is shown in
Figure~\ref{fig:calderon-setup}.

\smallskip\noindent $\bullet$ The fixed Dirichlet input for the DtN map \eqref{eq:num-d2nB}
is the continuous function $U: \partial\Omega \to \R$ defined by
$$
U(x,0) \ = \ U(x,1) \ := \  \sin(\pi x) \, , \quad
U(0,y) \ = \ U(1,y) \ := \ -\sin(\pi y)
$$
(in Figure~\ref{fig:calderon-setup}, $U(x)$ and the corresponding solution
$\hat u$ of (\ref{eq:num-d2nB}) are plotted).

\smallskip\noindent $\bullet$ The TCC constant $\eta$ in \eqref{eq:tcc} is not known for
this particular setup. In our computations we used the value $\eta = 0.45$ which
is in agreement with \textbf{A2}. \\ (Note that the convergence
analysis of the PLW method requires $\eta<1$ while the nonlinear LW method requires
the TCC with $\eta < 0.5$~\cite[Assumption (2.4)]{KNS08}.
The above choice allows the comparison of both methods.)

\smallskip\noindent $\bullet$ The ``exact data'' $y$ in \eqref{eq:ip-abstract} is
obtained by solving the direct problem \eqref{eq:num-d2nB} using a finite
element type method and adaptive mesh refinement (approx 131.000 elements).
In order to avoid inverse crimes, a coarser grid (with approx 33.000 elements)
was used in the finite element method implementation of the iterative methods.

\smallskip\noindent $\bullet$ In the numerical experiment with noisy data, artificially
generated (random) noise of 2\% was added to the exact data $y$ in order to generate
the noisy data $y^\delta$. For the verification of the stopping rule \eqref{def:discrep}
we assumed exact knowledge of the noise level and chose $\tau = 3$ in \eqref{def:tau-a-b.t},
which is in agreement with the above choice for $\eta$.

\begin{remark}[Choosing the initial guess] \label{rem:init_guess}
The initial guess $a_0(x)$ used for all iterative methods is presented
in Figure~\ref{fig:calderon-setup}. According to \textbf{A1} -- \textbf{A3},
$a_0(x)$ has to be sufficiently close to $a^\star(x)$ (otherwise the PLW
method may not converge). With this in mind, we choose $a_0(x)$ as the solution
of the Dirichlet boundary value problem
$$
\Delta a_0    \ = \ 0 \, ,    \ \mbox{in } \Omega \, , \quad\quad
       a_0(x) \ = \ U(x) \, , \ \mbox{at } \partial\Omega \, .
$$
This choice is an educated guess that incorporate the available {\em a priori}
knowledge about the exact solution $a^\star(x)$, namely: $a_0 \in H^1(\Omega)$
and $a_0(x) = a^\star(x)$ at $\partial\Omega_D$. Moreover, $a_0 = \argmin
\{ \norm{\nabla a}_{L^2(\Omega)}^2$ $|$ $a \in H^1(\Omega)$,
$ a^{}_{\partial\Omega}(x) = a^\star_{\partial\Omega}(x) \}$.
\end{remark}

\begin{remark}[Computing the iterative step] \label{rem:iter-step}
The computation of the $k$th-step of the PLW method (see \eqref{eq:plw}) requires the
evaluation of $F'(a_k)^* F_0(a_k)$. According to \cite{BELM04}, for all test
functions $v \in H^1_0(\Omega)$ it holds
$$
\inner{F'(a_k)^\times F_0(a_k)}{v}_{L^2(\Omega)}
\ = \ \inner{F_0(a_k)}{F'(a_k) v}_{L^2(\partial\Omega)}
\ = \ \inner{F_0(a_k)}{V}_{L^2(\partial\Omega)} \, ,
$$
where $F'(a_k)^\times$ stands for the adjoint of $F'(a_k)$ in $L^2(\Omega)$,
and $V \in H^1(\Omega)$ solves
$$
-\nabla \cdot (a_k(x) \, \nabla V) \ = \ \nabla \cdot (v \, \nabla F(a_k))
                                         \, , \ \mbox{in } \Omega \, , \quad\quad
                                   V \ = \ 0  \, , \ \mbox{at } \partial\Omega \, .
$$
Furthermore, in \cite{BELM04} it is shown that for all $\psi \in L^2(\partial\Omega)$
and $v \in H^1_0(\Omega)$
\begin{equation} \label{eq:F-adj}
\inner{F'(a_k)^\times \psi}{v}_{L^2(\Omega)}
\ = \ \inner{\psi}{V}_{L^2(\partial\Omega)}
\ = \ \inner{\nabla\Psi \cdot \nabla u_k}{v}_{L^2(\Omega)} \, ,
\end{equation}
where $\Psi$, $u_k$ $\in H^1(\Omega)$ solve
\begin{subequations} \label{eq:Psi_uk}
\begin{align}
-\nabla \cdot (a_k(x) \, \nabla \Psi) \ = \ 0 \, ,    \ \mbox{in } \Omega \, ,& \quad\quad
                                 \Psi \ = \ \psi \, , \ \mbox{at } \partial\Omega
                                 \label{eq:Psi} \\
-\nabla \cdot (a_k(x) \, \nabla u_k) \ = \ 0 \, ,    \ \mbox{in } \Omega \, ,& \quad\quad
                                 u_k \ = \ U(x) \, , \ \mbox{at } \partial\Omega \, .
\end{align}
\end{subequations}
respectively. An direct consequence of \eqref{eq:F-adj}, \eqref{eq:Psi_uk} is
the variational identity
$$
\inner{F'(a_k)^\times F_0(a_k)}{v}_{L^2(\Omega)}
\ = \ \inner{\nabla\Psi \cdot \nabla u_k}{v}_{L^2(\partial\Omega)} \, ,
\ \forall v \in H^1_0(\Omega) \, ,
$$
where $\Psi$ solves \eqref{eq:Psi} with $\psi = F_0(a_k)$.

Notice that $\nabla\Psi \cdot \nabla u_k$ is the adjoint, \emph{in $L^2(\Omega)$},
of $F'(a_k)$ applied to $F_0(a_k)$. 
We need to apply to $F_0(a_k)$, instead, the adjoint of $F'(a_k)$
\emph{in $H^1(\Omega)$}.
That is, we need to compute
$$
F'(a_k)^* F_0(a_k) = W_k \in H^1_0(\Omega) \, ,
$$
where $W_k$ is the Riesz vector satisfying \
$\inner{W_k}{v}_{H^1(\Omega)} = \inner{\nabla\Psi \cdot \nabla u_k}{v}_{L^2(\Omega)}$,
for all $v \in H^1(\Omega)$. A direct calculation yields
$$
(I-\Delta) \, W_k \ = \ \nabla\Psi \cdot \nabla u_k \, , \ \mbox{in } \Omega \, , \quad\quad
              W_k \ = \ 0 \, , \ \mbox{at } \partial\Omega \, .
$$
Within this setting, the PLW iteration \eqref{eq:plw} becomes
$$
a_{k+1} \ := \ a_k \, - \, (1-\eta)
             \dfrac{\norm{F_0(a_k)}_{L^2(\Omega)}^2}{\norm{W_k}_{H^1(\Omega)}^2} \ W_k \, .
$$

The iterative steps of the benchmark iterations LW and SD (implemented here for
the sake of comparison) are computed also using the adjoint of $F'(\cdot)$ in $H^1$.
Notice that a similar
argumentation can be derived in the noisy data case (see \eqref{eq:plwE}).
\end{remark}

For solving the elliptic PDE's above described, needed for the implementation of the
iterative methods, we used the package PLTMG \cite{Ban94} compiled with GFORTRAN-4.8
in a INTEL(R) Xeon(R) CPU E5-1650 v3.

\noindent {\bf First example:} Problem with exact data. \\
Evolution of both iteration error and residual is shown in Figure~\ref{fig:calderon-exact}.
The PLW method (GREEN) is compared with the LW method (BLUE) and with the
SD method (RED).
For comparison purposes, if one decides to stop iterating when
$\norm{F_0(a_k)} < 0.025$ is satisfied, the PLW method needs only 43
iterations, while the SD method requires 167 iterative steps and the LW
method required more than 500 steps.
\medskip

\noindent {\bf Second example:} Problem with noisy data. \\
Evolution of both iteration error and residual is shown in Figure~\ref{fig:calderon-noisy}.
The PLW method (GREEN) is compared with the LW method (BLUE) and with the
SD method (RED).
The stop criteria \eqref{def:discrep} is reached after 14 steps of the PLW iteration,
32 steps for the SD iteration, and 56 steps for the LW iteration.

\subsection{Second experiment: The semiconductor setup}

In this paragraph we consider the more realistic setup (in agreement with the
semiconductor models in Subsection~\ref{ssec:num-description}) with
$\partial\Omega_D \not\subseteq \partial\Omega$, and
$\Gamma_0 \neq \emptyset$, $\partial\Omega_N \neq \emptyset$.

In this experiment we have: (i) The voltage profile $U \in H^{1/2}(\partial\Omega_D)$
satisfies $U |_{\Gamma_1} = 0$; (ii) As in the previous experiment, the identification
of $a(x)$ is performed from a single measurement.
To the best of our knowledge, within this setting, Assumptions \textbf{A1} -- \textbf{A3}
where not yet established for the operator $F$ in \eqref{eq:ip-abstract} and its discretizations.
Therefore, although the operator $F$ is continuous \cite{BELM04}, it is still unclear
whether the analytical convergence results of the previous sections hold here.

The setup of the numerical experiments presented in this section is the following:

\smallskip\noindent $\bullet$ The elements listed below are the same as in the
previous experiment:

--- The domain $\Omega \subset \mathbb R^2$;

--- The parameter space $H^1(\Omega)$ and the function $a^\star(x)$ to be identified;

--- The computation of the ``exact data'' $y$ in \eqref{eq:ip-abstract};

--- The choice for the TCC constant $\eta$ in \eqref{eq:tcc} and for
    $\tau$ in \eqref{def:tau-a-b.t};

--- The level $\delta$ of artificially introduced noise;

--- The procedure to generate the noisy data $y^\delta$;

\smallskip\noindent $\bullet$ The boundary parts mentioned in Subsection~%
\ref{ssec:num-description} are defined by $\partial\Omega_D := \Gamma_0 \cup \Gamma_1$,
$\Gamma_1 := \{ (x,1) \, ;\ x \in (0,1) \}$,
$\Gamma_0 := \{ (x,0) \, ;\ x \in (0,1) \}$,
$\partial\Omega_N := \{ (0,y) \, ;\ y \in (0,1) \} \cup \{ (1,y) \, ;\ y \in (0,1) \}$. \\
%
%
(in Figure~\ref{fig:semicond-setup}~(a) and~(b), the boundary part $\Gamma_1$
corresponds to the lower left edge, while $\Gamma_0$ is the top right edge; the
origin is on the upper right corner).

\smallskip\noindent $\bullet$ The fixed Dirichlet input for the DtN map \eqref{eq:num-d2nB}
is the piecewise constant function $U: \partial\Omega_D \to \R$ is defined by
\ $U(x,0) := 1$, and $U(x,1) = 0$.
In Figure~\ref{fig:semicond-setup}~(a), $U(x)$ and the corresponding solution
$\hat u$ of (\ref{eq:num-d2nB}) are plotted.

\smallskip\noindent $\bullet$ The initial condition $a_0(x)$ used for all iterative
methods is shown in Figure~\ref{fig:semicond-setup}~(b) and is given by the solution of
the mixed boundary value problem
$$
\Delta a_0(x)      \ = \ 0 \, ,    \ \mbox{in } \Omega \, , \quad\quad
       a_0(x)      \ = \ U(x) \, , \ \mbox{at } \partial\Omega_D \, , \quad\quad
\nabla a_0\cdot\nu \ = \ 0  \, ,   \ \mbox{at } \partial\Omega_N \, ,
$$
analogously as in Remark~\ref{rem:init_guess}.

\smallskip\noindent $\bullet$ The computation of the iterative-step of the PLW method
is performed analogously as in Remark~\ref{rem:iter-step}, namely
$$
a_{k+1} \ := \ a_k \, - \, (1-\eta)
             \dfrac{\norm{F_0(a_k)}_{L^2(\Omega)}^2}{\norm{W_k}_{H^1(\Omega)}^2} \ W_k \, .
$$
where the Riesz vector $W_k \in H^1(\Omega)$ solves
$$
(I-\Delta) \, W_k  = \nabla\Psi \cdot \nabla u_k , \ \mbox{in } \Omega , \quad
              W_k  = 0 ,                           \ \mbox{at } \partial\Omega_D , \quad
\nabla W_k\cdot\nu = 0 ,                           \ \mbox{at } \partial\Omega_N ,
$$
and $\Psi$, $u_k$ solve
\begin{align*}
-\nabla \cdot (a_k(x) \, \nabla \Psi) = 0 ,             \ \mbox{in } \Omega , & \quad
                                 \Psi = F_0(a_k) , \ \mbox{at } \partial\Omega_{\Gamma_1} , \quad &
                   \nabla\Psi\cdot\nu = 0 ,             \ \mbox{at } \partial\Omega_N , \\
                   & \quad       \Psi = 0 , \hskip0.9cm \ \mbox{at } \partial\Omega_{\Gamma_0} , \quad & \\[1ex]
-\nabla \cdot (a_k(x) \, \nabla u_k) = 0 ,        \ \mbox{in } \Omega , & \quad
                                   u_k = U(x) , \,\ \ \mbox{at } \partial\Omega_D , \quad &
                    \nabla u_k\cdot\nu = 0 ,        \ \mbox{at } \partial\Omega_N .
\end{align*}

\medskip\noindent {\bf Example:} Problem with exact data. \\
Evolution of both iteration error and residual is shown in Figure~\ref{fig:semicond-exact}.
The PLW method (GREEN) is compared with the LW method (BLUE) and with the SD
method (RED).

\medskip\noindent {\bf Second example:} Problem with noisy data. \\
Evolution of both iteration error and residual is shown in Figure~\ref{fig:semicond-noisy}.
The PLW method (GREEN) is compared with the LW method (BLUE) and with the SD
method (RED).
The stop criteria \eqref{def:discrep} is reached after 9 steps of the PLW iteration,
22 steps for the SD iteration, and 153 steps for the LW iteration.

\begin{figure}[t!]
\centerline{ \includegraphics[width=9.2cm]{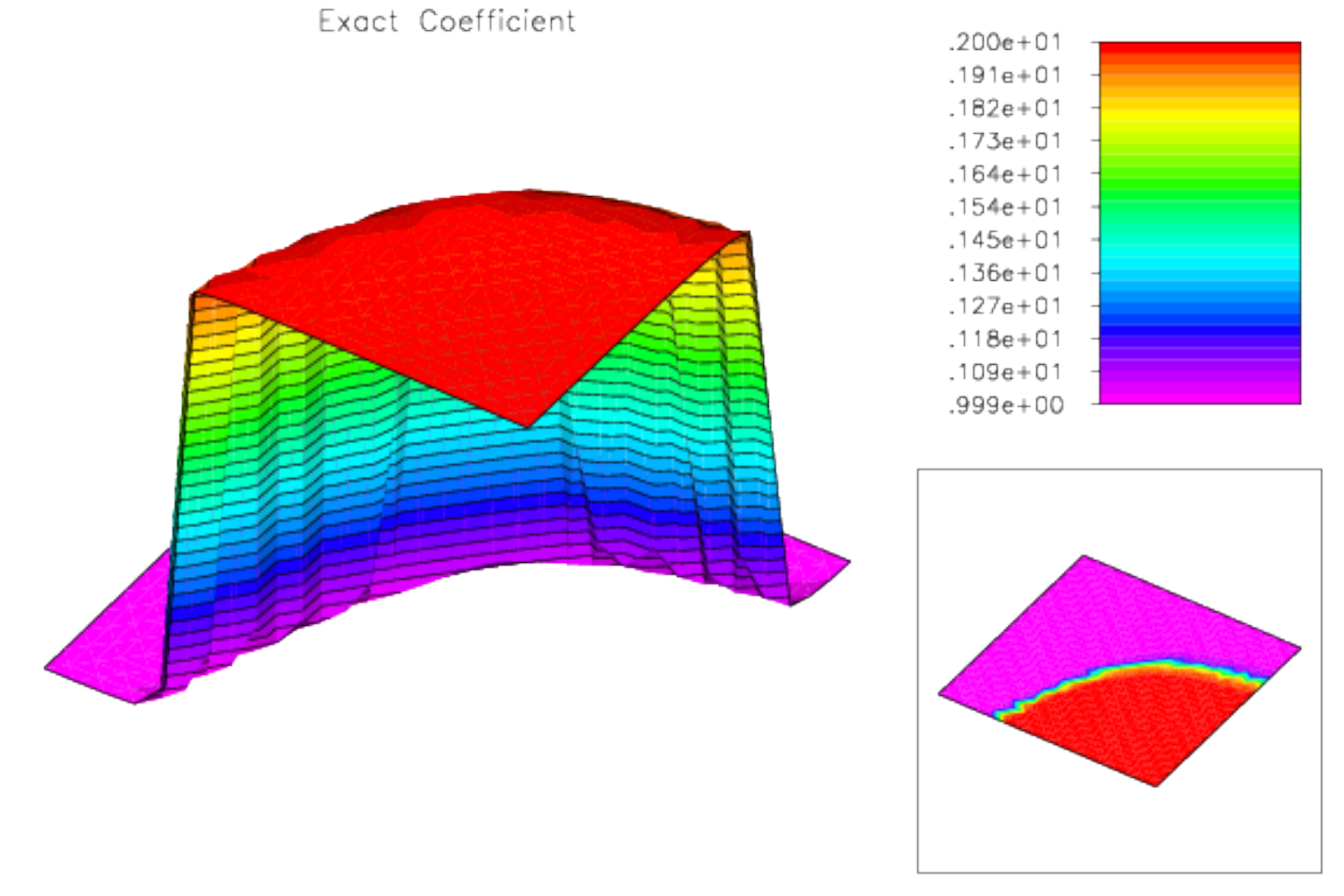} }
\centerline{ \includegraphics[width=9.2cm]{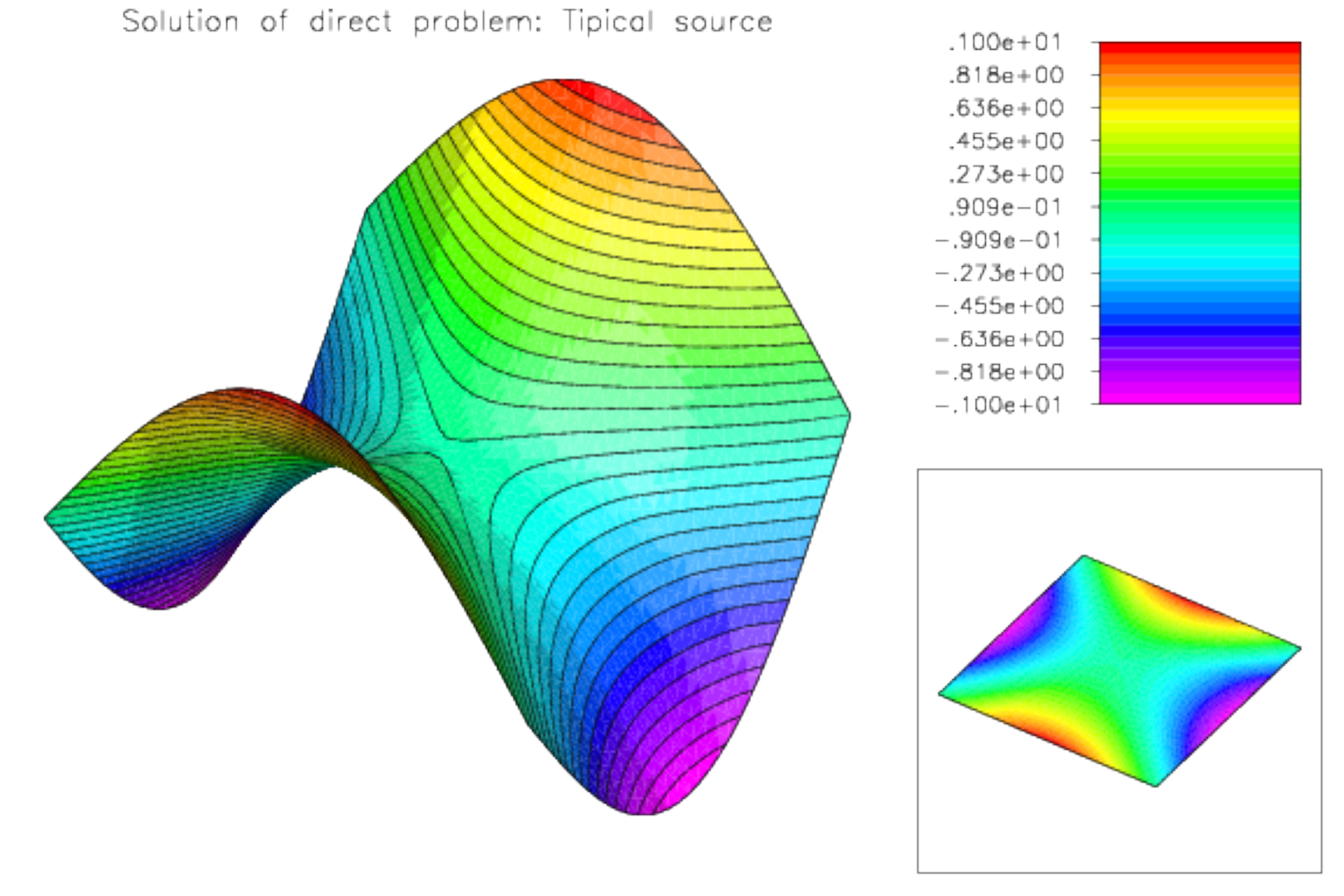} }
\centerline{ \includegraphics[width=9.2cm]{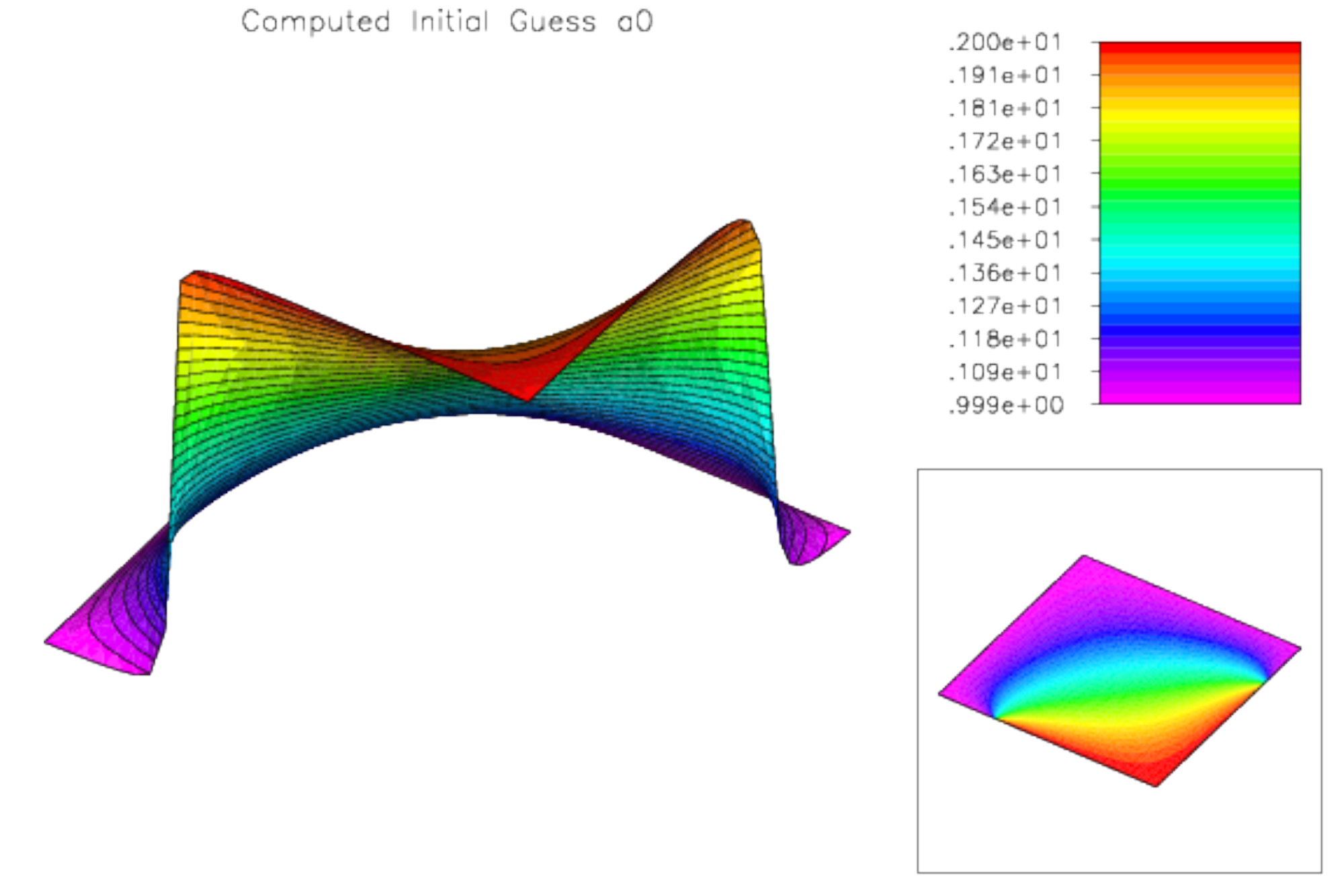} }
\caption{First experiment: setup of the problem.
{\bf Top:} The parameter $a^\star(x)$ to be identified;
{\bf Center:} Voltage source $U(x)$ (Dirichlet boundary condition at $\partial\Omega$
for the DtN map) and the corresponding solution $\hat u$ of \eqref{eq:num-d2nB};
{\bf Bottom:} Initial guess $a_0(x)$ for the iterative methods PLW, LW and SD.}
\label{fig:calderon-setup}
\end{figure}

\begin{figure}[t!]
\centerline{ \includegraphics[width=12cm,height=6cm]{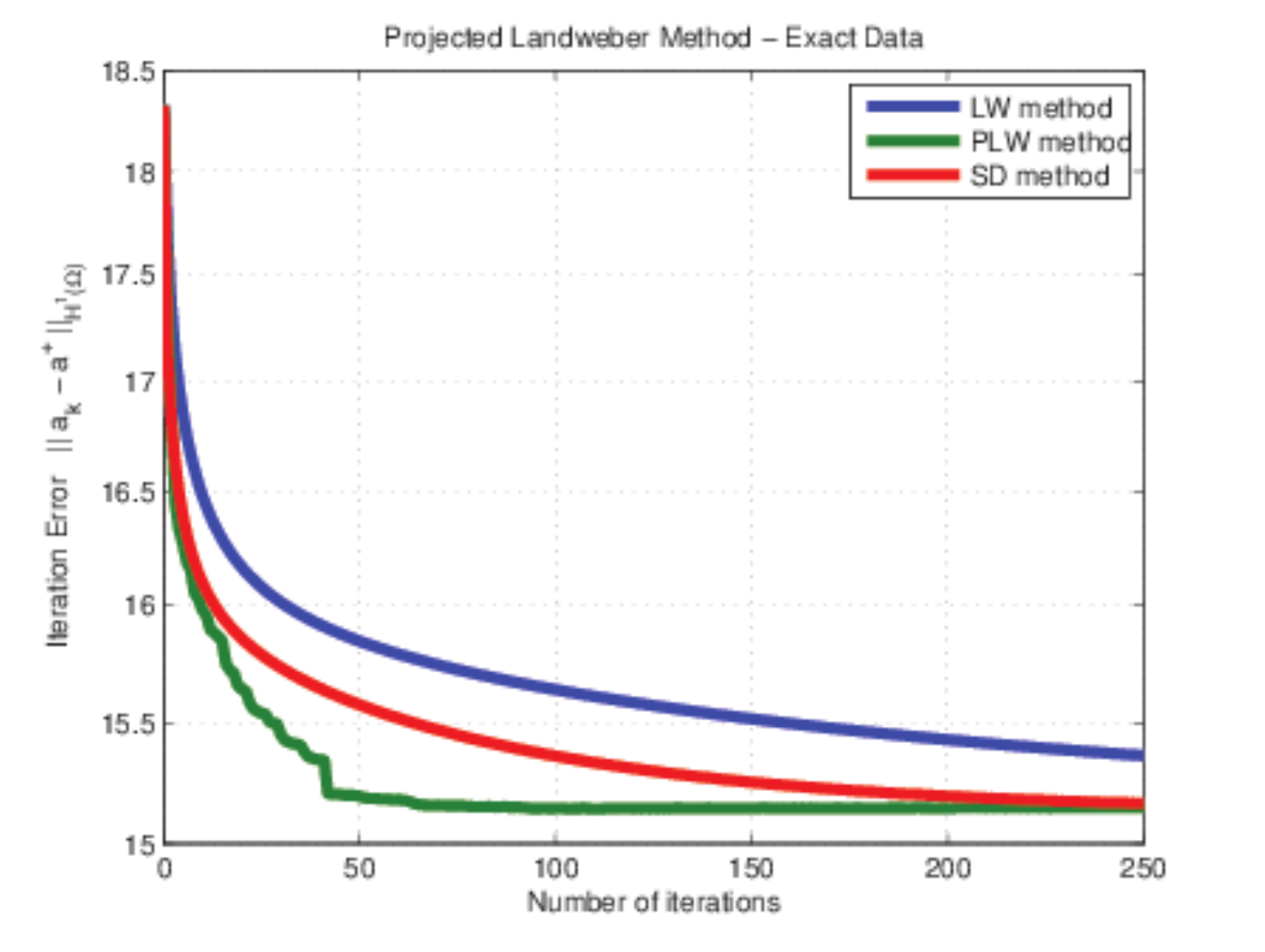} }
\centerline{ \includegraphics[width=12cm,height=6cm]{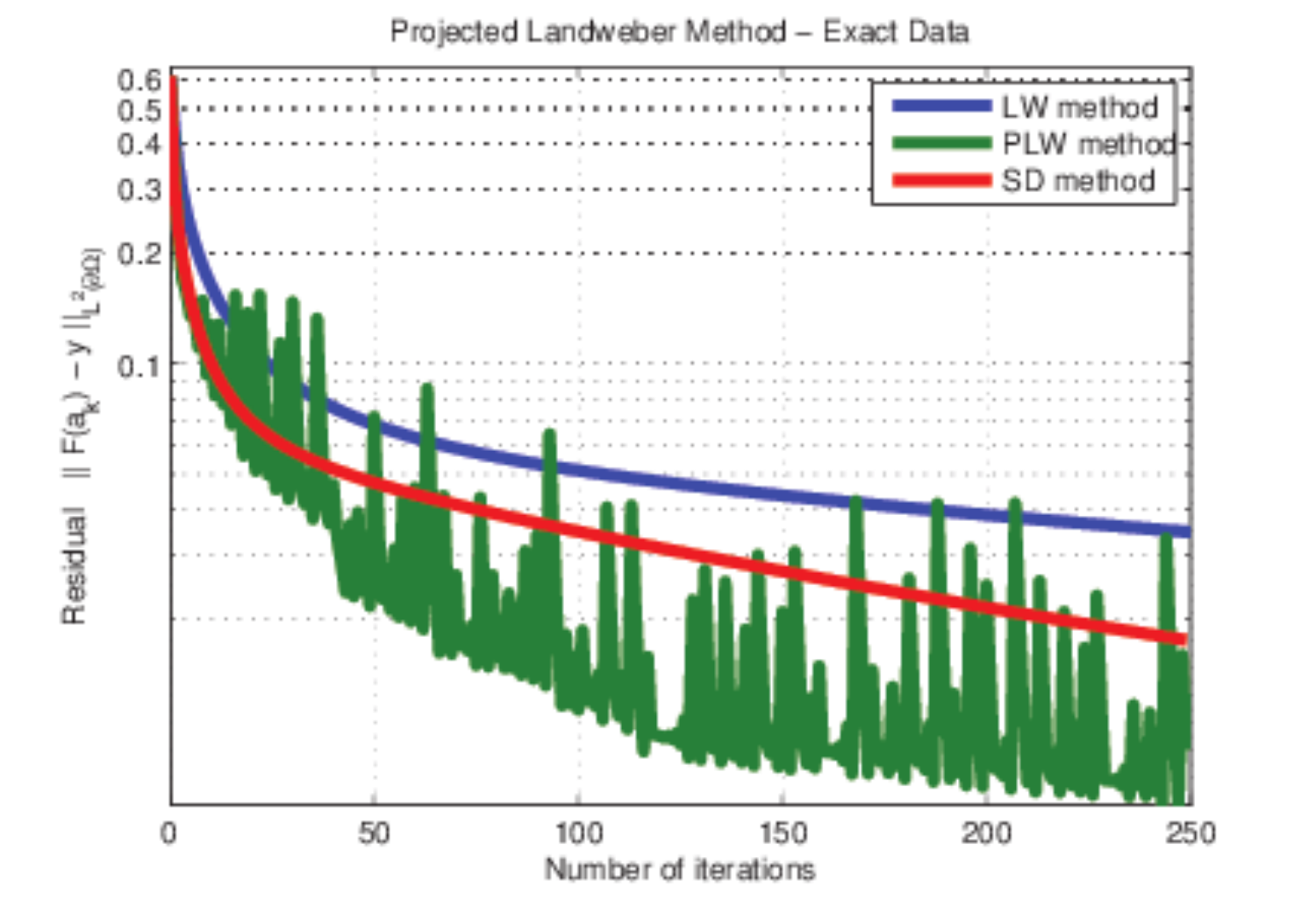} }
\centerline{ \includegraphics[width=12cm,height=6cm]{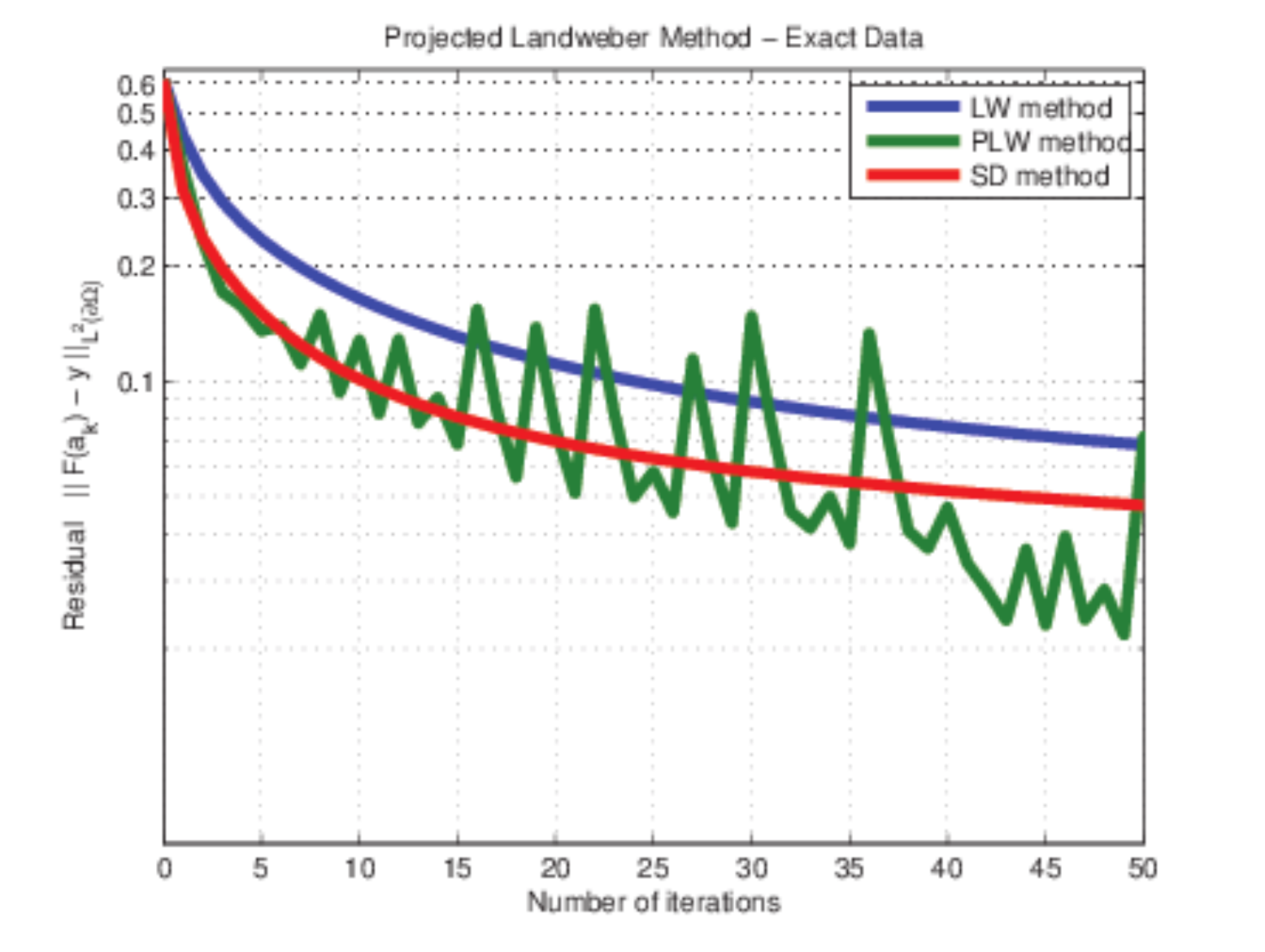} }
\caption{First experiment: example with exact data.
The PLW method (GREEN) is compared with the LW method (BLUE) and with the SD method (RED);
{\bf Top:} Iteration error $\norm{a_k - a^\star}_{H^1(\Omega)}$;
{\bf Middle:} Residual $\norm{F(a_k) - y}_{L^2(\partial\Omega)}$;
{\bf Bottom:} Residual, detail of the first 50 iterations.}
\label{fig:calderon-exact}
\end{figure}

\begin{figure}[t!]
\centerline{ \includegraphics[width=12cm]{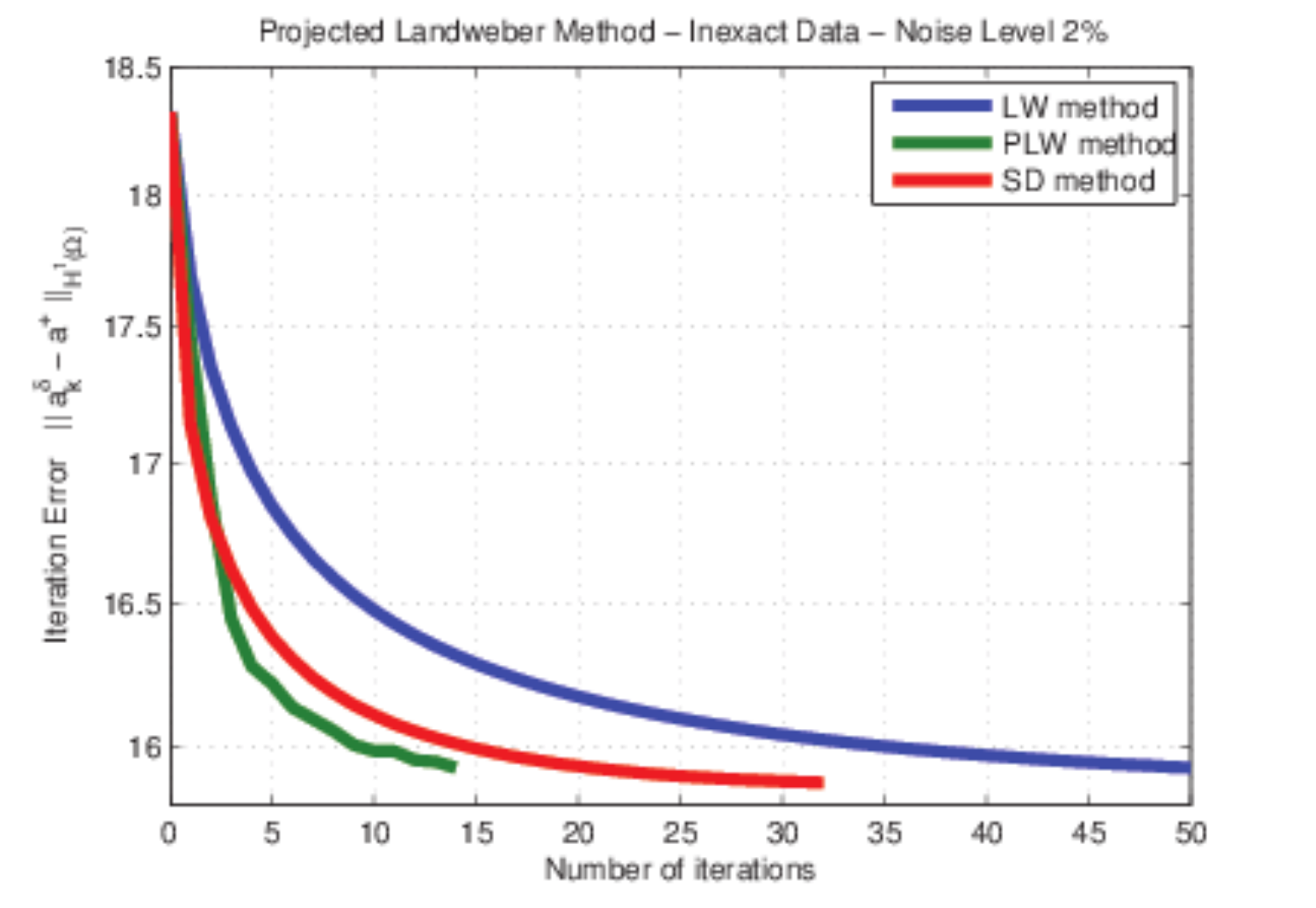} }
\centerline{ \includegraphics[width=12cm]{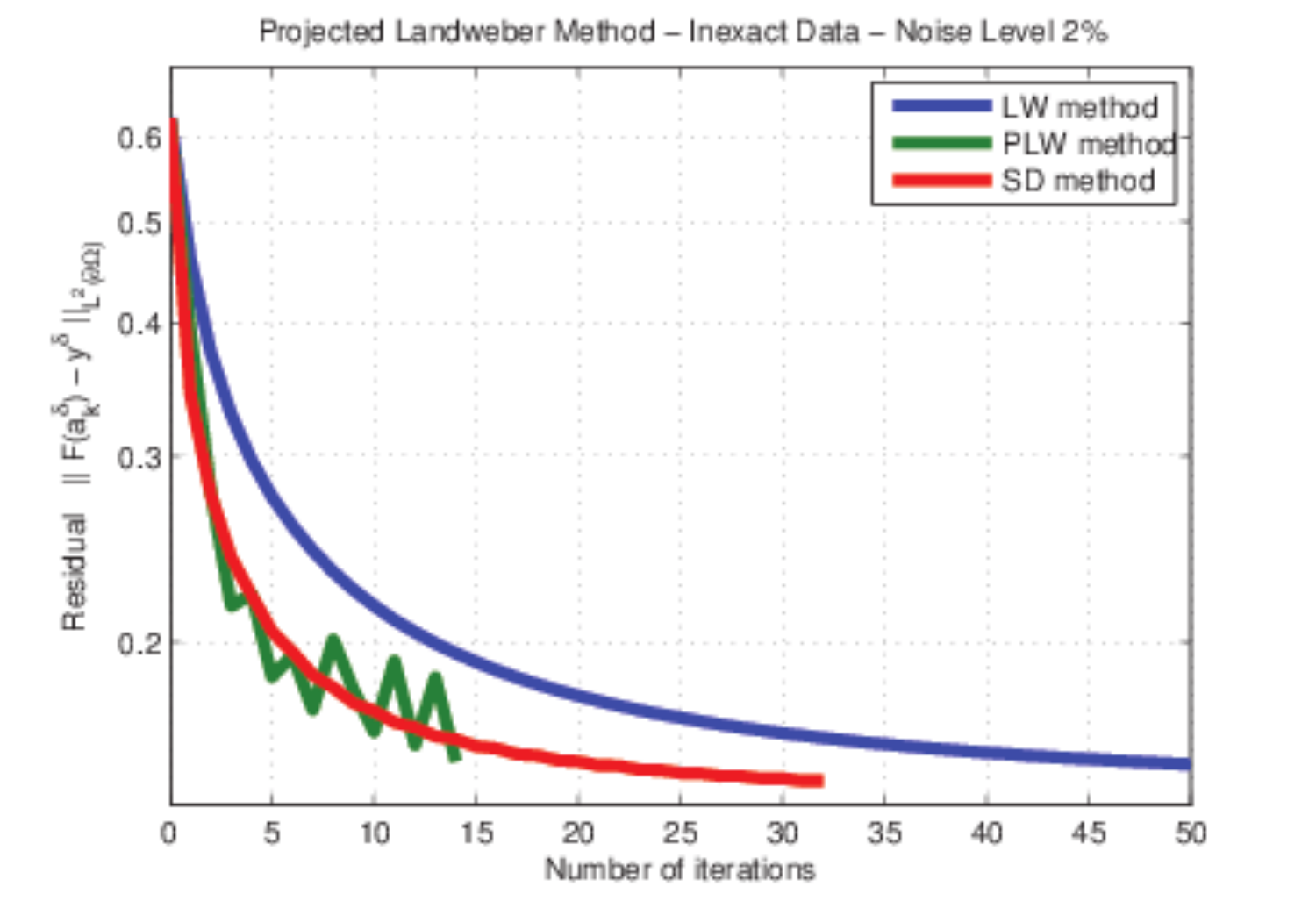} }
\caption{First experiment: example with noisy data.
The PLW method (GREEN) is compared with the LW method (BLUE) and with the SD method (RED);
{\bf Top:} Iteration error $\norm{a_k^\delta - a^\star}_{H^1(\Omega)}$;
{\bf Bottom:} Residual $\norm{F(a_k^\delta) - y^\delta}_{L^2(\partial\Omega)}$.}
\label{fig:calderon-noisy}
\end{figure}

\begin{figure}[t!]
\centerline{ \includegraphics[width=12.0cm]{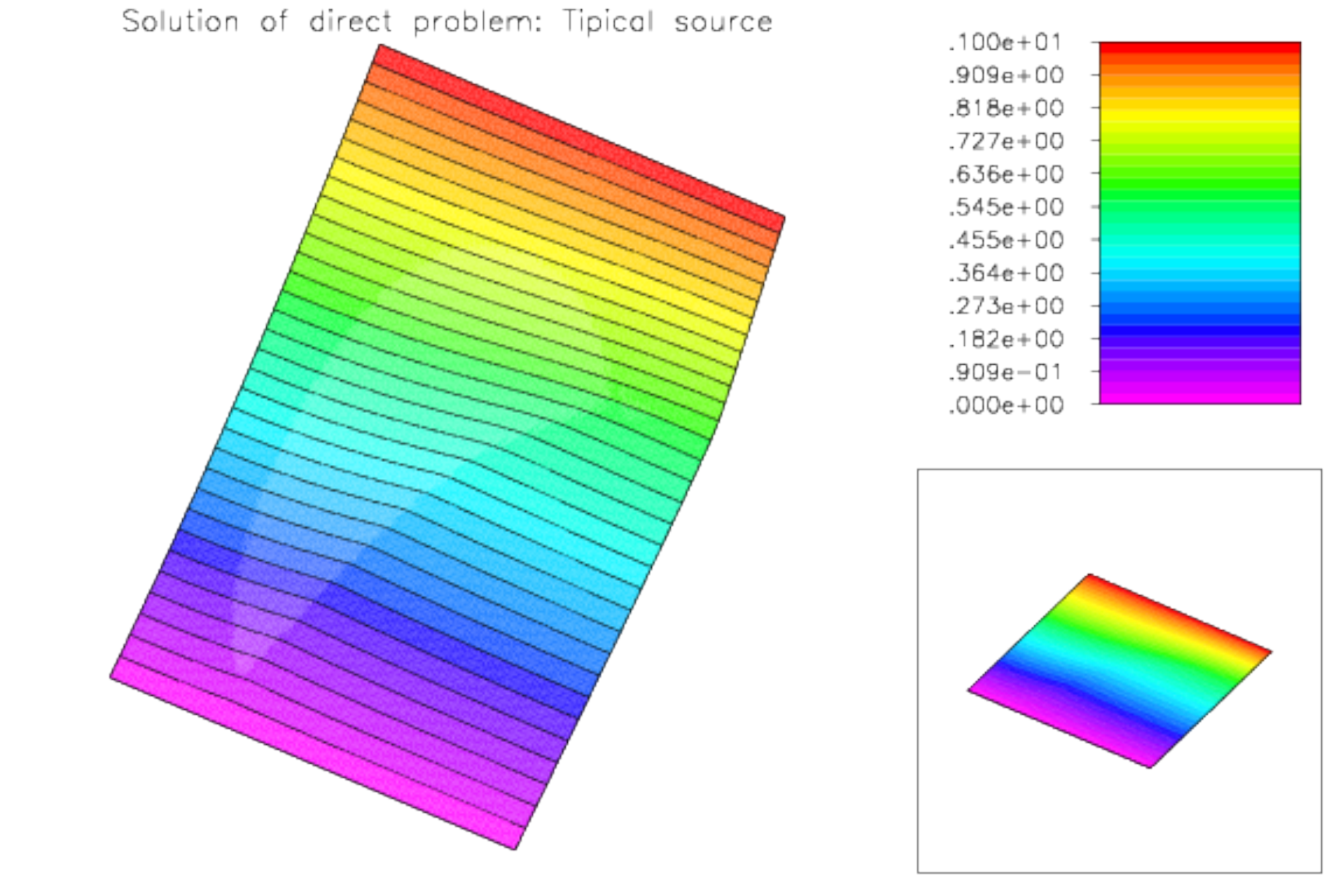} }
\centerline{ \includegraphics[width=12.0cm]{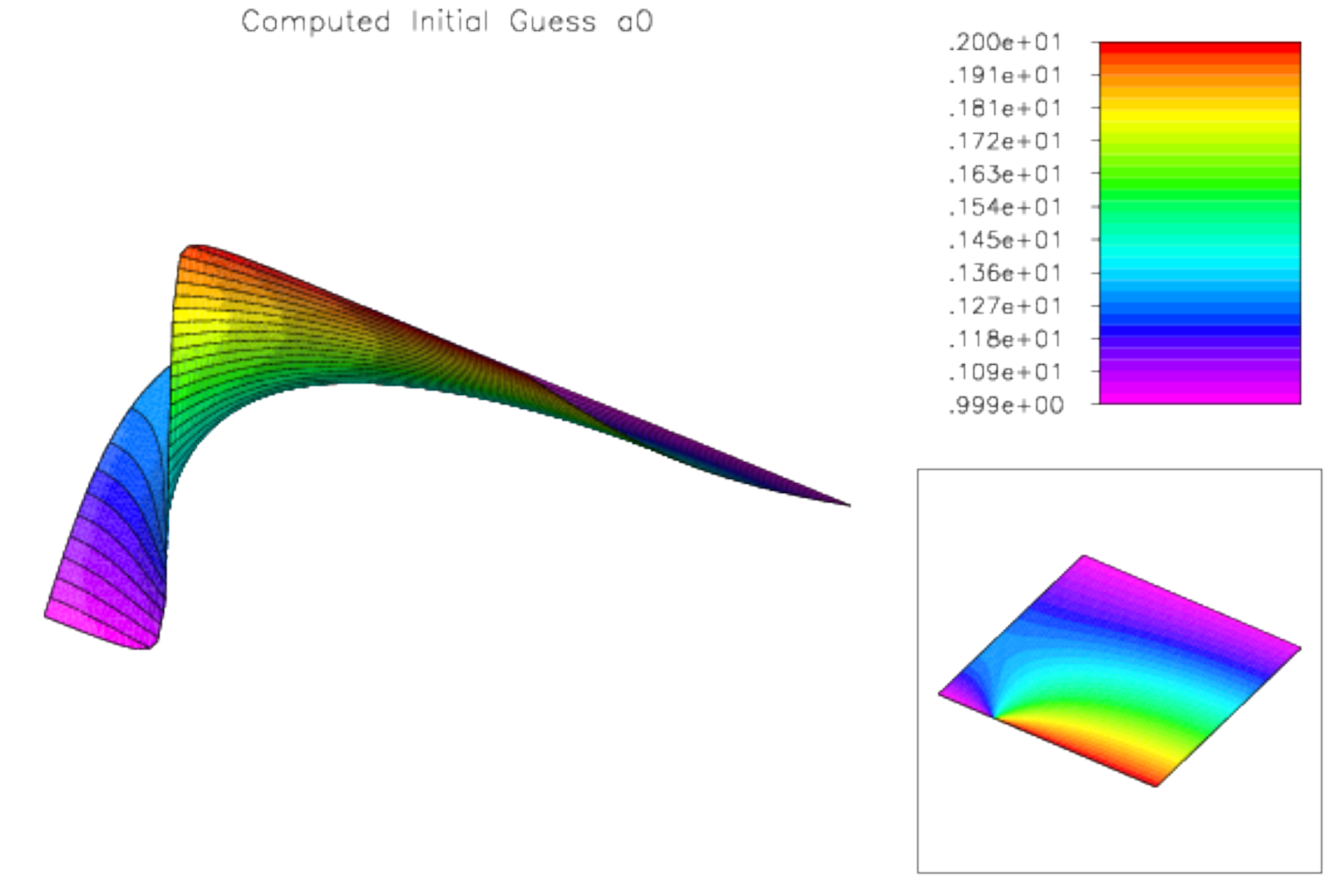} }
\caption{Second experiment: setup of the problem.
{\bf Top:} Voltage source $U(x)$ (Dirichlet boundary condition at $\partial\Omega_D$
for the DtN map) and the corresponding solution $\hat u$ of (\ref{eq:num-d2nB});
{\bf Bottom:} Initial guess $a_0 \in H^1(\Omega)$ satisfying $a_0(x) = U(x)$
at $\partial\Omega_D$ and $\nabla a_0(x)\cdot\nu(x) = 0$ at $\partial\Omega_N$.}
\label{fig:semicond-setup}
\end{figure}

\begin{figure}[t!]
\centerline{ \includegraphics[width=12cm,height=9cm]{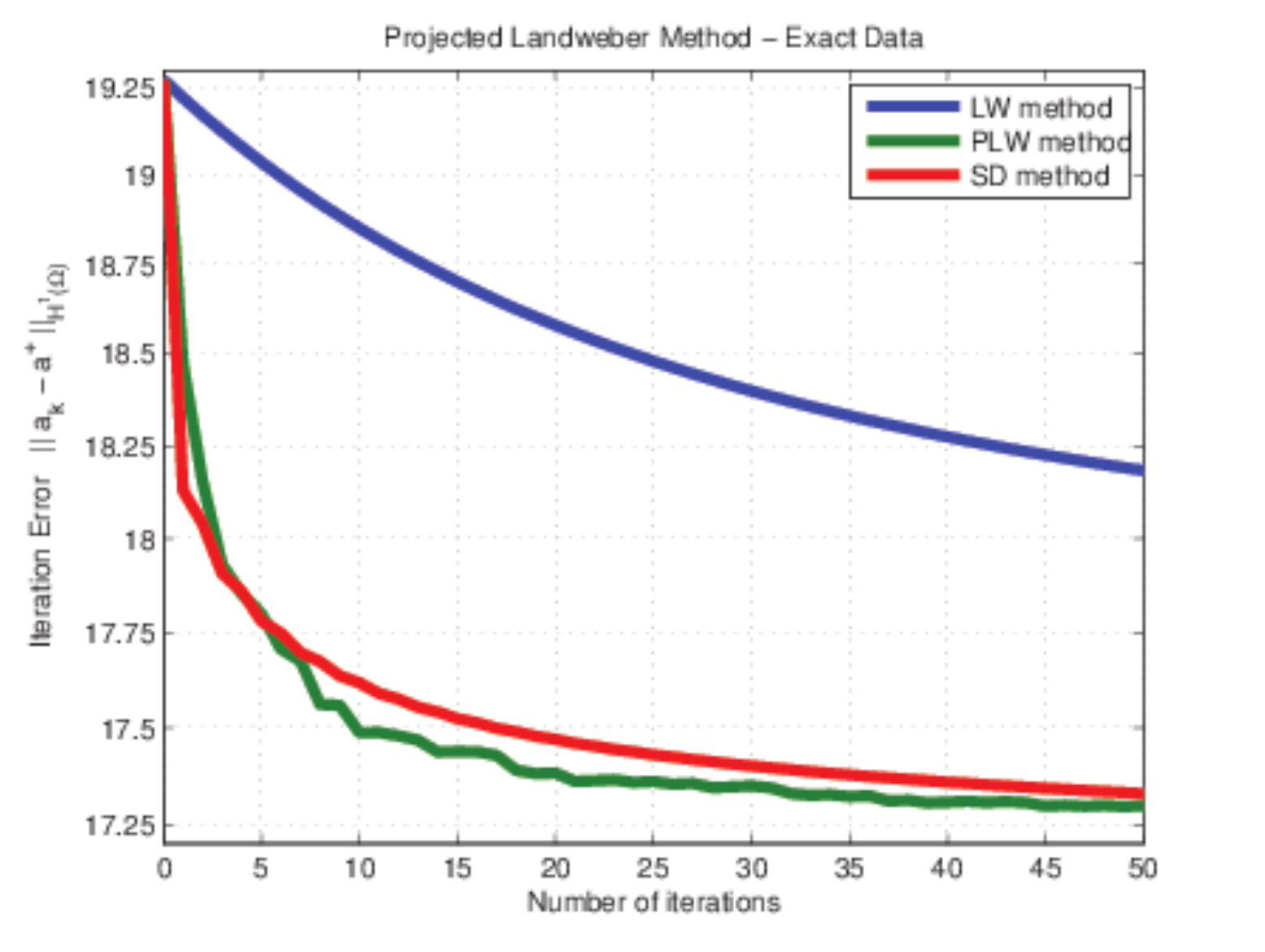} }
\centerline{ \includegraphics[width=12cm,height=9cm]{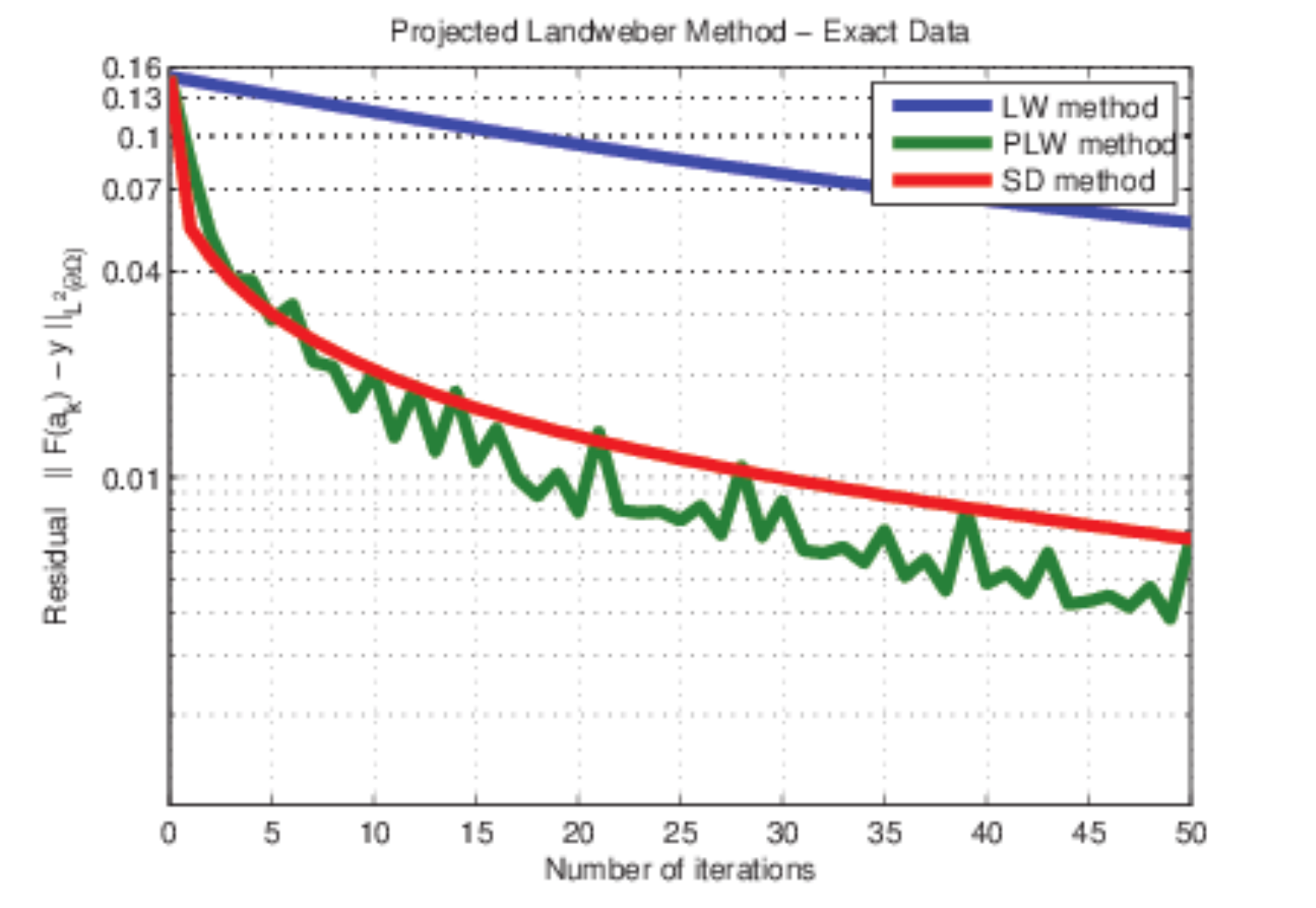} }
\caption{Second experiment: example with exact data.
The PLW method (GREEN) is compared with the LW method (BLUE) and with the SD method (RED);
{\bf Top:} Iteration error $\norm{a_k - a^\star}_{H^1(\Omega)}$;
{\bf Bottom:} Residual $\norm{F(a_k) - y}_{L^2(\Gamma_1)}$.}
\label{fig:semicond-exact}
\end{figure}

\begin{figure}[t!]
\centerline{ \includegraphics[width=12cm]{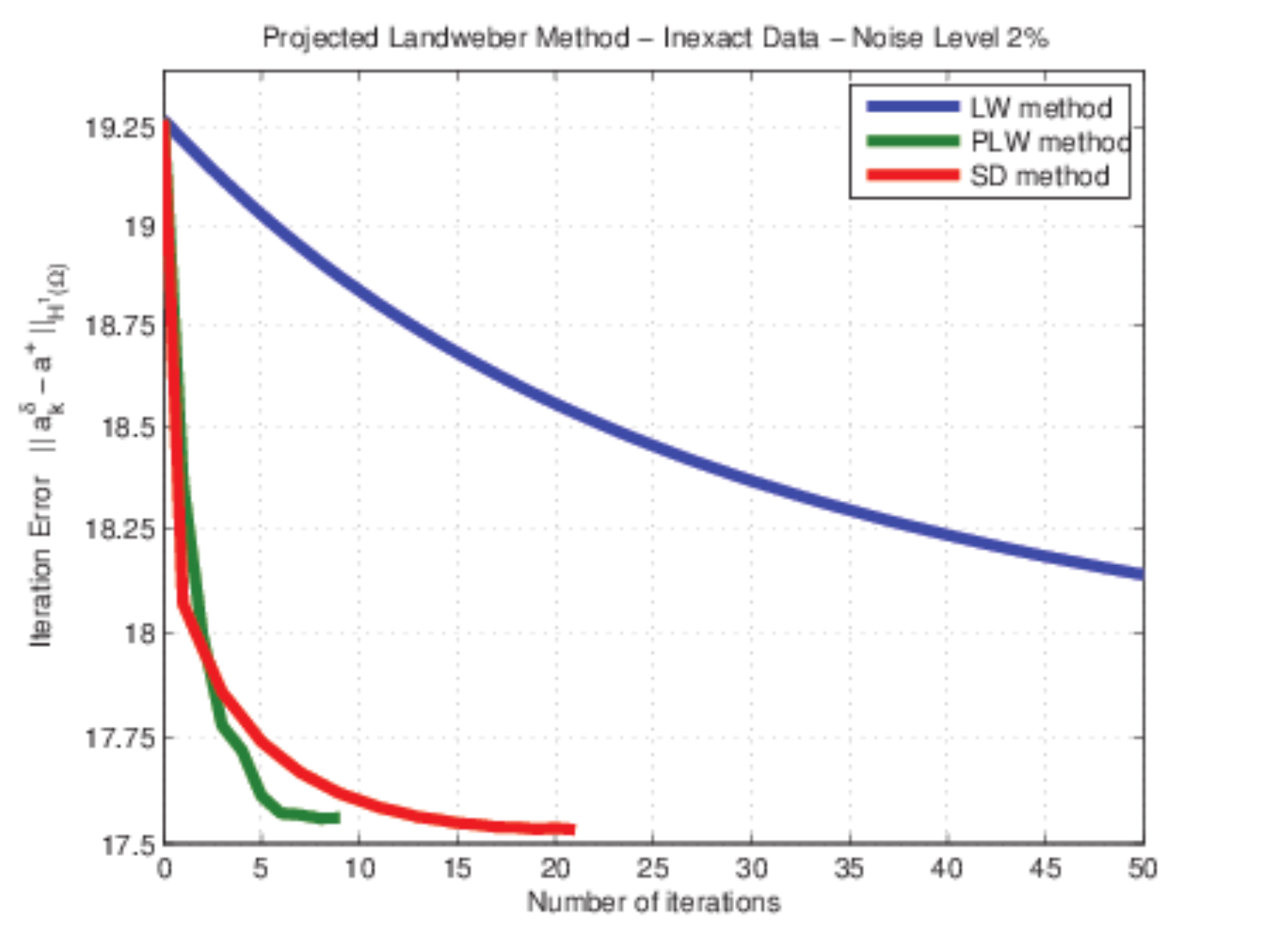} }
\centerline{ \includegraphics[width=12cm]{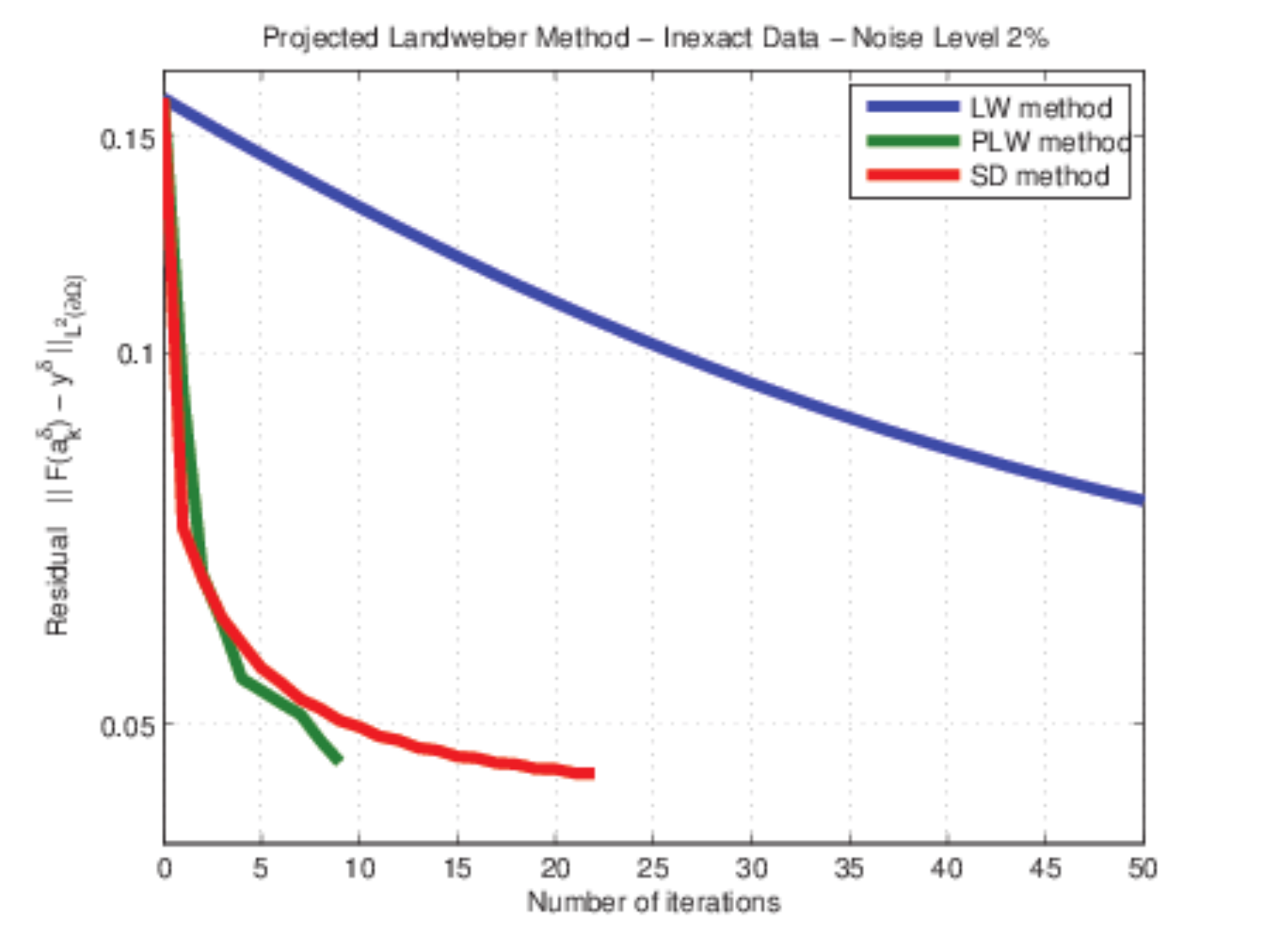} }
\caption{Second experiment: example with noisy data.
The PLW method (GREEN) is compared with the LW method (BLUE) and with the SD method (RED);
{\bf Top:} Iteration error $\norm{a_k^\delta - a^\star}_{H^1(\Omega)}$;
{\bf Bottom:} Residual $\norm{F(a_k^\delta) - y^\delta}_{L^2(\partial\Omega)}$.}
\label{fig:semicond-noisy}
\end{figure}

\section{Conclusions} \label{sec:conclusion}

In this work we use the TCC to devise a family of relaxed projection Landweber
methods for solving operator equation \eqref{eq:inv-probl}. The distinctive
features of this family of methods are:
\begin{itemize}
\item[$\bullet$] the basic method in this family (the PLW method) outperformed,
in our preliminary numerical experiments, the classical Landweber method as
well as the steepest descent method (with respect to both the computational
cost and the number of iterations);
\item[$\bullet$] the PLW method is convergent for the constant of the TCC in
a range {\em twice as large} as the one required for the convergence of Landweber
and other gradient type methods;
\item[$\bullet$] for noisy data, the iteration of the PLW method progresses
towards the solution set for residuals twice as small as the ones prescribed
by the discrepancy principle for Landweber \cite[Eq.~(11.10)]{EHN96} and
steepest descent \cite[Eq.~(2.4)]{Sch96} methods.
This follows from the fact that the constant prescribed by the discrepance
principle for our method and for Landweber/steepest-descent are, respectively
$$
\tau \ = \ \frac{1+\eta}{1-\eta} 
\quad \text{ and } \quad
\tau \ = \ 2 \frac{1+\eta}{1-2\eta};
$$
\item[$\bullet$] the proposed family of projec\-tion-type methods encompasses,
as particular cases, the Landweber method, the steepest descent method as well
as the minimal error method; thus, providing an unified framework for their
convergence analysis.
\end{itemize}

{\color{black}
In our numerical experiments, the \emph{residue} in the PLW method has
very strong oscillations for noisy data (Fig...) and for exact
data (Fig.)
Since this method iterations' aims to reduce the iteration error, a non-monotone
behavior of the residual is to be expected.
%
In ill-posed problem error and residual are poor correlated, which may
explain the large variations on the second one observed in our
experiments with the PLW.
%
Up to now it is not clear to us why this non-monotonicity happened to be
oscillatory in our experiments.
}

Although projection type methods for solving systems of \emph{linear} equations
dates back to \cite{Ci38,Kac37}, the use of these methods for ill-posed
equations is more recent, see, e.g, \cite{Nat86}.

A family of relaxed projection gradient-type methods for solving \emph{linear} ill-posed
operator equations was proposed in  \cite{MR75}.
In this work we extended to the non-linear case, under the TCC, the analysis of   \cite{MR75}.

\section*{Acknowledgments}

We thanks the Editor and the anonymous referees for the corrections and
suggestions, which improved the original version of this work.

A.L. acknowledges support from the Brazilian research agencies CAPES, CNPq
(grant 309767/2013-0), and from the AvH Foundation.
The work of B.F.S. was partially supported by CNPq (grants 474996/2013-1,
306247/ 2015-1) and FAPERJ (grants E-26/102.940/2011 and E-21/201.548/2014).

\bibliographystyle{abbrv}
\bibliography{projectedLW}

\end{document}